\documentclass[11pt]{amsart}
\usepackage{amsfonts}
\usepackage{amsmath}
\usepackage{enumerate}
\usepackage{color}
\usepackage{graphicx}
\usepackage{hyperref}
\usepackage[a4paper,top=3.5cm,bottom=3.5cm,left=3.5cm,right=3.5cm]{geometry}

\newtheorem{theorem}{Theorem}[section]
\newtheorem{corollary}[theorem]{Corollary}
\newtheorem{remark}[theorem]{Remark}
\newtheorem{lemma}[theorem]{Lemma}
\numberwithin{equation}{section}


\newcommand{\life}{t_*}
\newcommand{\jump}{P}
\newcommand{\SP}{X}
\newcommand{\B}{\mathcal{B}}
\newcommand{\EL}{\mathcal{L}}

\begin{document}

\title{A Model of Seasonal Savanna Dynamics}\thanks{This research was supported in part by the Polish NCN grant
2017/27/B/ST1/00100.}
 
 \author{
  Pawe\l{}~Klimasara}
  \address{Pawe\l{}~Klimasara, 
   Chair of Cognitive Science and Mathematical Modelling,
   University of Information Technology and Management in Rzesz\'ow,
   Sucharskiego 2, 35-225 Rzesz\'ow, Poland}
   \email{pklimasara@wsiz.edu.pl}
\author{
   Marta~Tyran-Kami\'nska}
   \address{ Marta~Tyran-Kami\'nska,
   Institute of Mathematics,
   University of Silesia in Katowice,
   Bankowa 14, 40-007 Katowice, Poland} 
   \email{marta.tyran-kaminska@us.edu.pl}

\begin{abstract}
    We introduce a mathematical model of savanna vegetation dynamics. The usual approach of nonequilibrium ecology is extended by including  the impact of wet and dry seasons. We present and rigorously analyze a model describing a mixed woodland-grassland ecosystem with stochastic environmental noise in the form of vegetation biomass losses manifesting fires. Both, the probability of ignition and the strength of these losses depend on the current season (as well as vegetation growth rates etc.). Formally it requires an introduction and analysis of a system that is a piecewise deterministic Markov process with parameters switching between given constant periods of time. We study the long time behavior of time averages for such processes.
\end{abstract}
\keywords{seasonality, savanna, tree-grass coexistence, herbivores, fire-vegetation feedback, piecewise deterministic Markov process}

\subjclass[2000]{60J25, 92D25, 92D40}
\maketitle

\section{Introduction}
Seasonality is a very important feature of various ecological systems that affects their characterization in many ways. Defined as persistent periodic changes of environmental variables like temperature, rainfall, etc. it is crucial to understand population dynamics of many systems \cite{white2020seasonality}. Despite its importance and universality, seasonality is usually not explicitly present in mathematical modeling attempts in ecology. Existing formal inclusion of seasons in models is often analyzed only numerically or based on Floquet theory \cite{klausmeier2008floquet,white2020seasonality}. We propose a seasonal model that is formally a stochastic hybrid process that jumps between two piecewise deterministic Markov processes (PDMPs, \cite{davis84}) reflecting repeated switching between two seasons. Although we focus on the example of savanna dynamics model, we provide a general theory that can be used for other, formally similar, models or in situations with more than two seasons present.

Savannas are biomes characterized generally as mixed tree-grass systems \cite{scholesarcher1997tree} and cover around 20\% of Earth's land surface. The competition for resources between trees and grasses is regulated by many factors including herbivore activity, temporary changes in water availability and fires \cite{van2019effects}. There is a rich literature on savanna models \cite{yatat2018tribute} based on incorporating into dynamical system vegetation losses due to fires with constant \cite{hoyer-leitzel2021, yatatdumont2021minimalistic} or random \cite{d2006probabilistic,baudena2010idealized} frequency. Despite its ecological significance and prospective impact on model parameters, these approaches do not include explicit representation of seasonality. We take into account facts that in humid/mesic savannas rainfall happens primarily in wet seasons, boosting the vegetation growth, and results in more grass fuel for fires, happening more frequently in dry seasons, that cause then more damage to tree cover (see \cite{williams1999fire,n2018season,accatino2013humid,werner2012growth} and the references therein).
Most up-to-date savanna dynamics models that take  rainfall and/or soil moisture into account refer to their mean annual value (e.g. \cite{van2003effects,synodinos2018impact,staver2011tree}). Even when annual mean rainfall changes each year then these are much smaller variations in water availability than between seasons. Moreover,  the duration of wet and dry seasons usually are not the same. Nevertheless, there is no direct presence of wet and dry seasons in these models.

In Section \ref{s:model2d} we introduce a simple seasonal model of savanna vegetation dynamics. A system of logistic equations describes growth of tree and grass biomasses and without disturbances it would result in woodland (the trees outcompete grasses). We add random fire events manifested as discrete biomass losses. The probability of ignition and fire severity increase with grass biomass (fuel load). Later in Section \ref{s:model4d} we focus on more complicated version of this model where we introduce two more equations describing grazers and browsers populations that additionally impact the vegetation dynamics. We provide figures of sample trajectories illustrating the behavior of these systems.
The resulting models are stochastic only due to randomly occurring fires. The seasons are present in these models as repeated deterministic switching of growth rate parameters. This is entirely different setting than random switching between model parameters that has been used recently in PDMP models, e.g. in ecological dynamics \cite{benaim2016,benaim2018,cloez2017,hening2019,hening2020,hening2021,hening2021arxiv}, epidemiology \cite{benaim2019}, or population genetics \cite{guillin2019arxiv}.

To follow seasonal changes we introduce additional time variable measuring the duration of stay in a given season.   This allows us to   represent the savanna models as PDMPs in Section~\ref{s:pdmp} and provide sufficient conditions for their ergodicity (Theorem~\ref{th:main}).
Due to periodic changes we cannot study the usual convergence of distributions of such processes and we must look at convergence of time averages.
In Section~\ref{s:mean} we explore formally the long time behavior of averages of homogeneous Markov processes and we formulate  one of the main results of the paper that
$T$-processes, as in  \cite{tweedie79, meyn_tweedieII}, satisfying a~Foster-Lyapunov type condition (CD2) in~\cite{meyn_tweedieII} are mean-ergodic (Theorem \ref{t:mean}). Then we show that our savanna model PDMPs are such $T$-processes (Theorem~\ref{th:mainVT}) which implies
Theorem~\ref{th:main}. In Section~\ref{s:proof} we provide the proof of Theorem~\ref{t:mean}.
The paper concludes with a~short discussion.

\section{A  basic model of savanna dynamics with seasonality}\label{s:model2d}

 We start with adding seasonality into a simple model to grasp the actual problem with such modeling approach without intricacies of extended models rich in details and parameters. Basically as our minimal model we continue our work from \cite{klimasara2018model} based on \cite{beckage2011grass} and modify the model presented there.
It is a simple competition model between trees and grasses referred to as their biomass amounts (denoted as $W$ and $G$ respectively) in the system of differential equations:
  \begin{equation*}
\begin{cases}
\frac{dW}{dt}=r_w W\left(1-\frac{W}{K_w}\right),\\ \frac{dG}{dt}=r_g G\left(1-\frac{G}{K_g}-\frac{W}{K_w}\right),
\end{cases}
\end{equation*}
where  $r_w$ and $r_g$ are the respective growth rates, while the carrying capacities for the biomass amounts are $K_w$ and $K_g$.
We normalize both 'amount of biomass' variables to lie in $[0,1]$ by the change of variables:
\[
w(t)=\frac{W(t)}{K_w},\quad g(t)=\frac{G(t)}{K_g},
\]
and hence the model has the form:
\begin{equation}\label{no-fire}
\begin{cases}
\frac{dw}{dt}=r_w w\left(1-w\right), \\
\frac{dg}{dt}=r_g g\left(1-g-w\right).
\end{cases}
\end{equation}
Observe that \eqref{no-fire} has three stationary solutions $(1,0)$, $(0,0)$, and $(0,1)$, and that the point $(1,0)$ is asymptotically stable.

We add fires to this model and assume that they occur randomly with
\[
\Pr \big(\text{occurrence of fire in }(t, t+\Delta t)\,\big|\,w(t)=w, g(t)=g\big)=
\lambda(w,g)\Delta t+o(\Delta t),
\]
where  the function $\lambda\colon [0,1]^2\to \mathbb{R}_+$ is continuous.
We denote the consecutive moments of fire events by
 $t_1, t_2, \ldots$ The impact of fire in the model is implemented as the appropriate biomass losses according to
\begin{equation}\label{loss.eq}
\begin{cases}
w(t_n)=w(t_n^-)-M_w \, w(t_n^-),\\
g(t_n)=g(t_n^-)-M_g \, g(t_n^-),
\end{cases}
\end{equation}
where $ M_w,M_g\in (0,1)$ are constants and $v(t^-)=\lim_{s\rightarrow t^-}v(s)$ for $v\in\{w, g\}$. When fires occur at fixed deterministic times $t_{n+1}=t_{n}+\tau$, where $\tau$ is a constant,  one obtains impulsive systems  (see e.g. \cite{yatat2018tribute} or \cite{hoyer-leitzel2021} with $\alpha=1$).

The assumption that impact of fires is described discretely via constant biomass losses can be improved by a more general setting of random losses. To this end we replace the constants $M_w$ and $M_g$ with random variables. Their distribution can depend on the current biomass amounts.
Moreover such setup can be extended even more by including the seasonality. Thus we introduce two savanna seasons (wet and dry) and code them with  variable $i$, where $i=0$ refers to the dry season while $i=1$ to the wet one.  Some model parameters change between seasons. Thus e.g. $r_w^i$ and  $r_g^i$ denote the growth rates in the $i$th season. The seasons are time intervals changing alternately and to include this fact in the model we add a new clock variable $\zeta$ describing how long the current season lasts and hence schedules the moments when variable $i$ switches its value. The length of the $i$th season will be denoted by the constant value $\zeta_m^i$. Additionally, by introducing a two-dimensional variable $\xi$ for biomass amounts, the differential equation in the $i$th season takes the final form:
\begin{equation}\label{twoeq}
\begin{cases}
\frac{d\xi}{dt}=b^i(\xi),\\
\frac{d\zeta}{dt}=1,
\end{cases}
 \quad \text{where} \quad \xi=\begin{pmatrix}
w\\
g
\end{pmatrix} \quad \text{and} \quad
b^i(\xi)
= \begin{pmatrix}
r_w^i w\left(1-w\right)\\
r_g^i g\left(1-g-w\right)
\end{pmatrix}.
\end{equation}
Each time $\zeta$ reaches its maximal value $\zeta^i_m$, the present season ends and hence we reset the \emph{'duration of stay in a season'} that is the value of $\zeta$  to 0 and swap the model dynamics by changing all the affected parameters (via switching $i$ to $1-i$ everywhere). Note that the long time behavior of $\xi$ is the same as for \eqref{no-fire}.

Accordingly, the introduction of seasons changes the fire events description to:
\begin{multline}\label{prfi}
\Pr \big(\text{occurrence of fire in }(t, t+\Delta t)\,\big|\,\xi(t)=\xi, \zeta(t)=\zeta, i(t)=i\big)\\=
\lambda^i(\xi,\zeta)\Delta t+o(\Delta t),
\end{multline}
where $\lambda^i$ is a positive continuous function.
We assume that in the $i$th season for each $\xi$ and $\zeta$ there exists a probability measure $\mathcal{P}^i(\xi,\zeta, A)$ describing both biomass changes due to random fire events
\begin{equation}\label{amfi}
\Pr\big(\xi(t_n)\in A\,\big|\,\xi(t_n^{-})=\xi, \zeta(t_n^{-})=\zeta, i(t_n^{-})=i\big) = \mathcal{P}^i(\xi,\zeta, A)
\end{equation}
for any Borel subset $A$ of $\mathbb{R}^2$. In particular, we consider
\begin{equation}\label{d:amfi}
\mathcal{P}^i(\xi,\zeta, A)=\int_{\Theta}\mathbf{1}_A\big(S^i_\theta(\xi)\big)p_\theta^i(\xi,\zeta)\nu^i(d\theta),
\end{equation}
where $\Theta=(0,1)^2$, $\nu^i$ is a Borel measure on $\Theta$, $(\theta,\xi,\zeta)\to p_\theta^i(\xi,\zeta)$ is a continuous function such that
\begin{equation}\label{e:ptheta}
\int_{\Theta}p_\theta^i(\xi,\zeta)\nu^i(d\theta)=1.
\end{equation} The transformation $S_\theta^i$ describes the biomass loss due to fire and to simplify presentation we take
\begin{equation} \label{d:Stheta}
S_\theta^i(\xi)=\big((1-\theta_w)w,(1-\theta_g)g\big), \quad \xi=(w,g)\in (0,1)\times(0,1],\quad \theta=(\theta_w,\theta_g).
\end{equation}
Assuming that these losses are constant fractions of available amounts before the fire incident we have $p_\theta^i(\xi,\zeta)\equiv 1$ and  $\nu^i(d\theta)=\delta_{(M_w^i,M_g^i)}(d\theta)$, where $M_w^i,M_g^i\in (0,1)$ are constants and $\delta_M$ is the Dirac measure at the point $M=(M_w^i,M_g^i)$.
On the other hand when these losses are random we can take as $\nu^i$ the usual Lebesgue measure on the unit square $(0,1)^2$. Then for each $(\xi,\zeta)$ the function $\theta\mapsto p_\theta^i(\xi,\zeta)$ describes the density of the distribution of biomass losses due to fire. In Figure \ref{fig:2d} we display  sample graphs of wood and grass biomasses in time, including losses due to random fires and changes of seasons.
\begin{figure}[htb]
    \centering
    \includegraphics[width=0.75\textwidth]{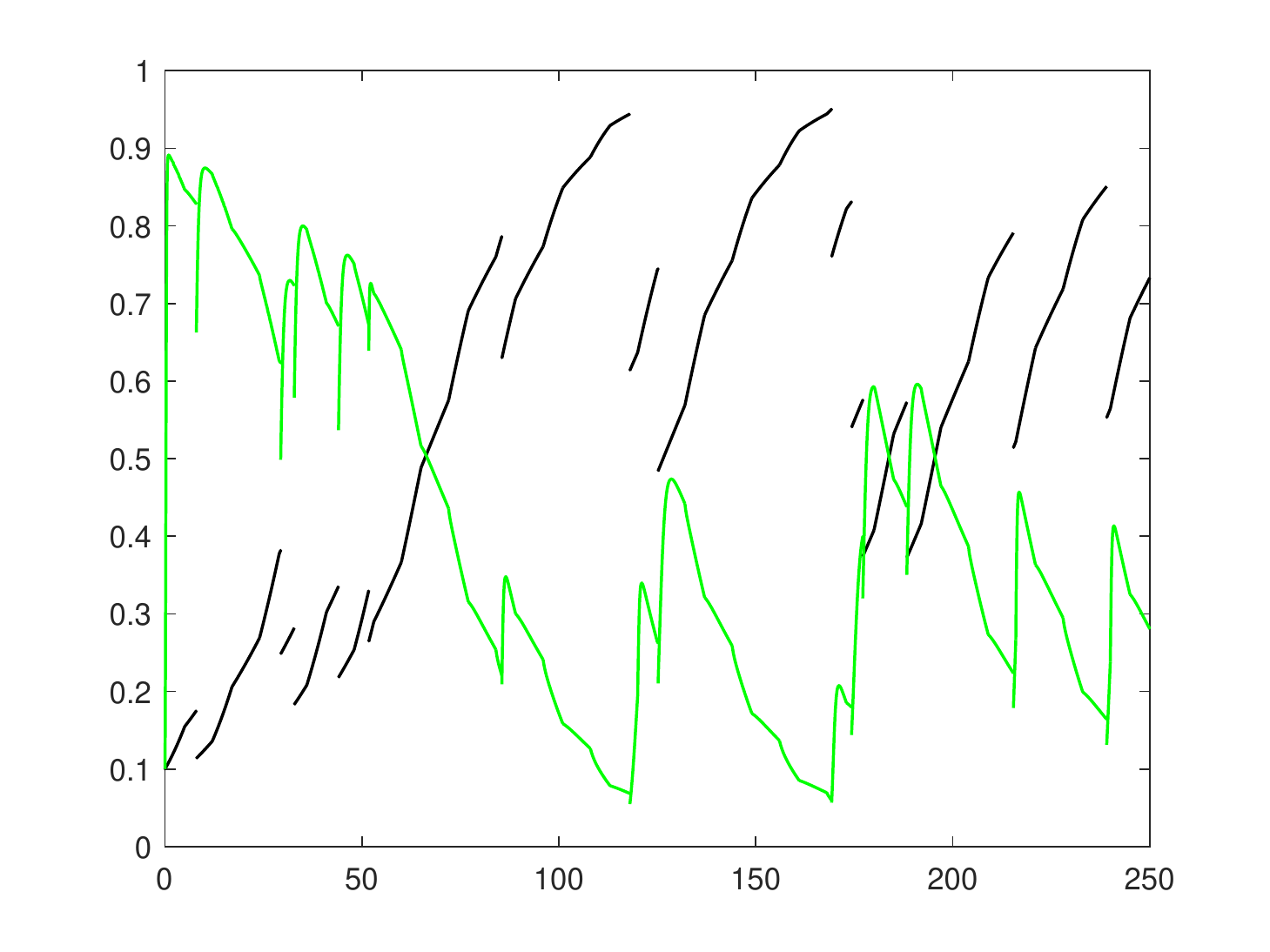}
    \caption{Sample trajectories of the stochastic process in \eqref{twoeq}--\eqref{amfi} with parameters for the dry season $r_w^0=0.05$, $r_g^0= 2.5$, $M_w^0= 0.35$, $M_g^0= 0.2$, $\lambda^0(w,g,\zeta)=0.09g+ 0.01$, $\zeta^0_m=7$ and for the wet season $r_w^1=0.1$, $r_g^1=10.75$, $M_w^1=0.2$, $M_g^1= 0.05$, $\lambda^1(w,g,\zeta)= 0.001g+0.02$, $
\zeta^1_m=5$. The green line represents the graph of the grass biomass amount over time $t\mapsto g(t)$, and the black line refers to the wood biomass $t\mapsto w(t)$}
    \label{fig:2d}
\end{figure}

\section{A savanna model featuring herbivores and seasonality}\label{s:model4d}

We extend the model from the previous section  by adding populations of herbivores depending on the food availability (grass for grazers and trees for browsers). We start  with introduction of the population dynamics model that we later complete by adding random fire events and seasonality. The differential equations describing the dynamics of tree and grass biomasses contain additional terms referring to the presence of herbivores:
\begin{equation*}%
\begin{cases}
\frac{dW}{dt}=r_w W\left(1-\frac{W}{K_w}\right)-c_W H_B W,\\  \frac{dG}{dt}=r_g G\left(1-\frac{G}{K_g}-\frac{W}{K_w}\right)-c_G H_G G,
\end{cases}
\end{equation*}
where $H_G$, $H_B$  are populations of grazers and browsers and $c_W$, $c_G$ denote consumption coefficients of woody/grass biomass by browsers/grazers, accordingly.  We describe the population dynamics of herbivores as in \cite{van2019effects} by:
  \begin{equation*}%
\begin{cases}
\frac{dH_G}{dt}=e_G H_G G-d_G H_G^2,\\
\frac{dH_B}{dt}=e_W H_B W-d_B H_B^2,
\end{cases}
\end{equation*}
where $e_W$, $e_G$  are  consumption and conversion efficiency coefficients of woody/grass biomass by browsers/grazers
and $d_B$, $d_G$ denote death rates of browsers and grazers, respectively.

Similarly to the model from Section \ref{s:model2d} we normalize biomass amounts and additionally redefine the herbivore population variables by
\[
w(t)=\frac{W(t)}{K_w},\quad g(t)=\frac{G(t)}{K_g},
\quad h_G(t)=\frac{d_G H_G(t)}{e_G K_g}, \quad h_B(t)=\frac{d_B H_B(t)}{e_W K_w},
\]
which enforces us to change the parameters as well
\[
c_w\equiv c_W \frac{e_g}{d_G},\quad c_g\equiv c_G \frac{e_w}{d_B}, \quad e_w\equiv e_WK_W,\quad e_g\equiv e_GK_G.
\]
These modifications lead to the simpler system of differential equations:
\begin{equation}\label{no-fire.herb}
\begin{cases}
\frac{dw}{dt}=r_w w\left(1-w\right)-c_w h_B w,\\
\frac{dg}{dt}=r_g g\left(1-g-w\right)-c_g h_G g,\\
\frac{dh_G}{dt}=e_g h_G\left(g-h_G\right),\\
\frac{dh_B}{dt}=e_w h_B\left(w-h_B\right).
\end{cases}
\end{equation}
This system has a unique positive stationary point
\[
w= \frac{r_w}{r_w+c_w}, \quad g= \frac{r_g}{r_g+c_g}\frac{c_w}{r_w+c_w}, \quad h_G=g, \quad h_B=w,
\]
and it is asymptotically stable.
Again, we add alternating seasons, dry ($i=0$) and wet ($i=1$), by changing the plant growth rates $r^i_w$, $r^i_g$ along with them. We illustrate the long time behavior of this system in Figure \ref{fig:herb}. A typical periodicity of seasonal models is clearly visible in this figure.
\begin{figure}[htb]
    \centering
    \includegraphics[width=0.75\textwidth]{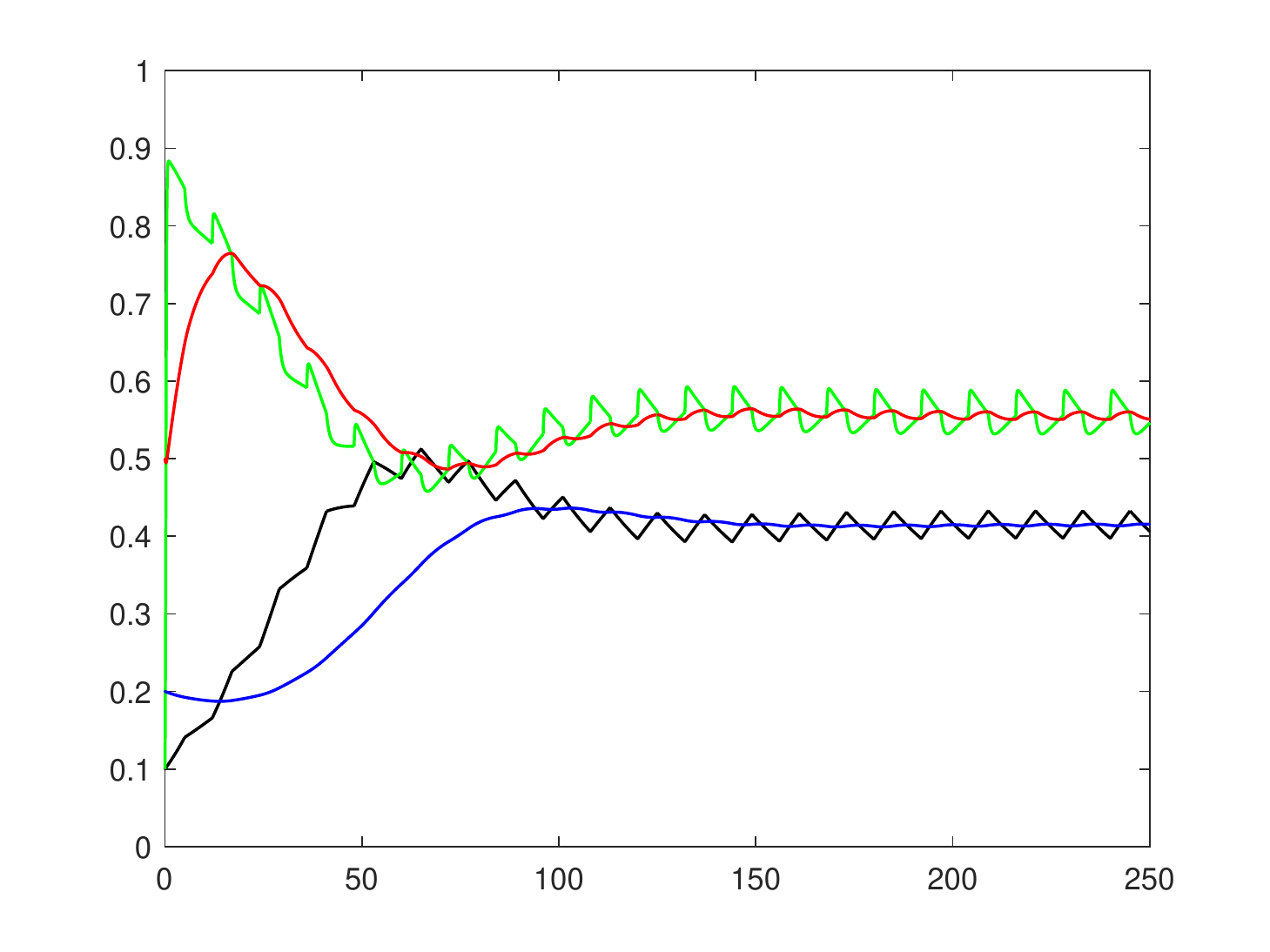}
    \caption{Deterministic trajectories for system \eqref{no-fire.herb} with alternating seasons and initial condition $w=g=0.1$, $h_G=0.5$, $h_B=0.2$. We used the same color references and parameters as in Figure \ref{fig:2d} and additionally $c_w=e_w=0.1$, $c_g=e_g=0.2$. The red line represents the graph of the population of grazers over time $t\mapsto h_G(t)$ while the blue line refers to the population of browsers $t\mapsto h_B(t)$}
    \label{fig:herb}
\end{figure}

Finally we may incorporate the fire events into this model in analogy to the basic no-herbivore model.
Now we have a 4-dimensional vector $\xi=(w,g,h_G,h_B)$ and the dynamics is given by equations
\eqref{twoeq} with the values for $b^i(\xi)$ taken from system \eqref{no-fire.herb}. Fire-related probabilities, \eqref{prfi} and \eqref{amfi}, remain unchanged, while the transformation $S_\theta^i$ takes the form
\begin{equation} \label{d:Stheta4d}
S_\theta^i(\xi)=\big((1-\theta_w)w,(1-\theta_g)g, h_G,h_B\big),\quad  \xi=(w,g,h_G,h_B),\quad \theta=(\theta_w,\theta_g).
\end{equation}
A sample trajectory of the main model containing all the stochastic effects is presented in Figure \ref{fig:herbr}.
\begin{figure}[htb]
    \centering
    \includegraphics[width=0.75\textwidth]{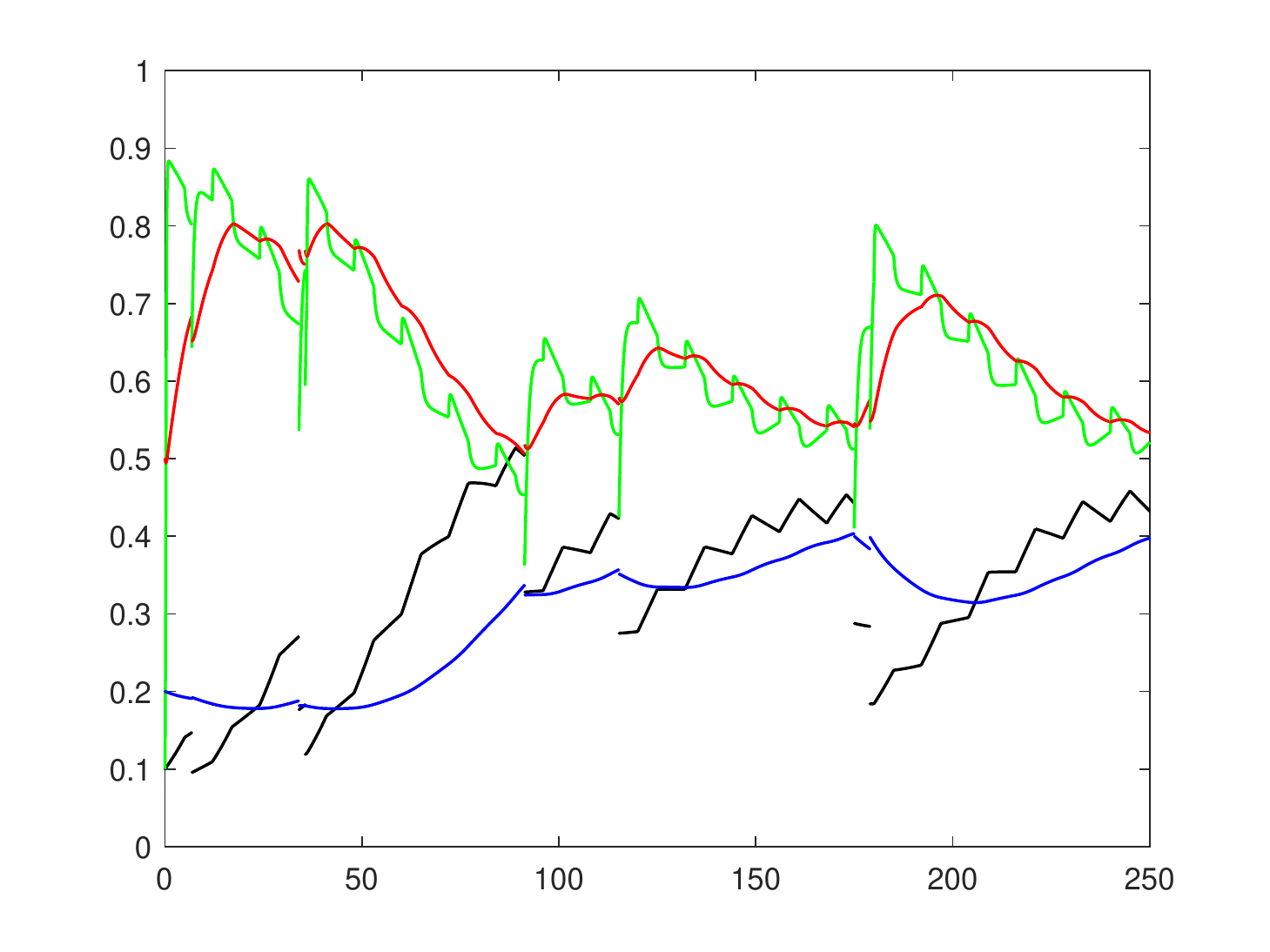}
    \caption{Sample trajectories for the stochastic model of savanna vegetation dynamics with herbivores, random fires and seasonality. The parameters and colors are the same as in Figure \ref{fig:herb}}
    \label{fig:herbr}
\end{figure}

\section{PDMPs and seasonality}\label{s:pdmp}

In this section we recognize introduced savanna models as PDMPs with the aim to show that such processes can be used to study seasonality in ecological/population models. After brief introduction of the theory basics we formulate one of the main results of this paper concerning the long term behavior of savanna models.
For general background on PDMPs we refer the reader to \cite{davis93, rudnickityran17}.

We consider two flows that arise as solutions of ordinary differential equations
\begin{equation}\label{e:de}
\xi'(t)=b^i\big(\xi(t)\big),
\end{equation}
where $b^i\colon \mathbb{R}^d\to\mathbb{R}^d$ is a (locally) Lipschitz continuous mapping.
We assume that $\SP_i$ is a  Borel subset of $\mathbb{R}^d$ such that for each $\xi_0\in \SP_i$ the solution $\xi(t)$ of
\eqref{e:de} with initial condition $\xi(0)=\xi_0$ exists and $\xi(t)\in \SP_i$ for all $t\ge 0$. We denote this solution by
$\varphi_t^i(\xi_0)$, $i=0,1$.
We also introduce the clock variable $\zeta$ and the season variable $i$. Thus, the variable $x=(\xi,\zeta,i)$  changes in time according to the flow
\begin{equation}\label{d:phi}
\phi_t(x)=\phi_t(\xi,\zeta,i)=\big(\varphi_t^i(\xi),\zeta+t,i\big) .
\end{equation}

If we consider the 2-dimensional model from Section \ref{s:model2d} (no herbivores) then equations \eqref{e:de} and \eqref{d:phi} introducing the flow $\phi_t$ correspond to equation \eqref{twoeq} with $\xi=(w,g)\in \SP_i$, where $\SP_i=(0,1)\times(0,1]$ and $i=0,1$, while for the 4-dimensional model from Section \ref{s:model4d} (with grazers and browsers) we have $\xi=(w,g,h_G,h_B)\in \SP_i$ with $\SP_i=(0,1)\times(0,1]\times(0,\infty)^2$.

Our state space is
\[
\SP=\bigcup_{i}\SP_i\times[0,\zeta^i_m)\times
\{i\},\]
where $\zeta_m^i$ is the length of the $i$th season. The flow $\{\phi_t\}$ can exit
the set $\SP$ in a finite positive time  through a boundary $\Gamma$ of $\SP$.  Under our assumptions  we have
\[
\Gamma=\bigcup_{i}\SP_i\times \{\zeta_m^i\}\times \{i\}
\]
and the \emph{hitting time} of the boundary $\Gamma$ is given by
\begin{equation}\label{d:exit}
\life(x)=\inf\{t>0:\phi_{ t}(x)\in \Gamma \}= \zeta_m^i-\zeta \quad \text{for } x=(\xi,\zeta,i) \in \SP.
\end{equation}
If the state of the process at the end of a given season is represented by the point $(\xi,\zeta_m^i,i)$ from the boundary $\Gamma$, then the process moves to the point $(\xi,0,1-i)$ at the beginning of the next season.
Thus, jumps are described
by a stochastic kernel
$\jump $ defined by
\[
\jump(x,B)=\int_{\Theta}
\mathbf{1}_B \big(\mathbf{S}(x,\theta)\big) \nu(x,d\theta), \quad x\in \SP\cup \Gamma, B\in \B,
\]
where $\mathbf{S}\colon (\SP\cup \Gamma)\times \Theta \to \SP $ is a measurable transformation and $\nu(x,\cdot)$ is a stochastic kernel. In reference to \eqref{d:amfi}, we consider
\begin{equation}\label{d:S}
\mathbf{S}(x,\theta)=\mathbf{S}(\xi,\zeta,i,\theta)=\begin{cases}
\big(S_\theta^i(\xi),\zeta,i\big),&\text{if } \zeta <\zeta_m^i, \\
(\xi,0,1-i), & \text{if } \zeta=\zeta_m^i,
\end{cases}
\end{equation}
and
\begin{equation}\label{d:nu}
\nu(x,d\theta )=\begin{cases}
 p_\theta^i(\xi,\zeta)\nu^i(d\theta),   &  \text{if } \zeta <\zeta_m^i,\\
  \nu^i(d\theta),   &  \text{if }  \zeta=\zeta_m^i.
\end{cases}
\end{equation}
 Finally, let the jump rate function be defined by
$q(\xi,\zeta,i)=\lambda^i(\xi,\zeta)$ for  $(\xi,\zeta,i)\in \SP$.
For each $x\in \SP$ we define
\begin{equation}\label{d:tail}
F_x(t)=\mathbf{1}_{[0,\life(x))}(t)\exp\left\{-\int_0^t q\big(\phi_r(x)\big)dr\right\},  \quad t\ge 0,
\end{equation}
where $\phi$ is as in \eqref{d:phi}.
If we start at the point $\Psi_0=(\xi_0,\zeta_0,i_0)$ at time $\tau_0$, then we follow the path  $t\mapsto \phi_{t-\tau_0}(\Psi_0)$ up to the occurrence of either the fire or the next season, whichever comes first. Thus the next jump time $\tau_1$ is chosen according to the distribution
\[
\mathbb{P}(\tau_1-\tau_0>t\,|\,\Psi_0=x)=F_x(t).
\]
Then we define
\[
\Phi(t)=\phi_{t-\tau_0}(\Psi_0), \quad \Phi_1=\phi_{\tau_1-\tau_0}(\Psi_0), \quad \Psi_1=\mathbf{S}(\Phi_1,\vartheta_1), \]
where $\vartheta_1$ is a random variable with distribution $\nu(\Phi_1,\cdot)$,
and we restart the process from the point $\Psi_1$. In this way we define a sequence  $\Psi_n$ of $\SP$-valued random variables and jump times $\tau_n$ such that the process $\Phi=\{\Phi(t):t\ge 0\}$ is defined by
\begin{equation}\label{d:Phi}
\Phi(t)=\phi_{t-\tau_n}(\Psi_n) \quad \text{for }\tau_n\le t<\tau_{n+1}, \end{equation}
where
\begin{equation}\label{d:Psin}
\Psi_n=\mathbf{S}\big(\phi_{\sigma_n}(\Psi_n),
\vartheta_n\big), \quad \sigma_n=\tau_n-\tau_{n-1},
\end{equation}
and
$\vartheta_n$ is a  $\Theta$-valued random variable with distribution $\nu(\phi_{\sigma_n}(\Psi_n),\cdot)$, $n\in \mathbb{N}$.

We conclude the section with  the main theorem of this paper concerning each of the Markov processes $\Phi=\{\Phi(t):t\ge 0\}$ representing the models from Sections \ref{s:model2d} and~\ref{s:model4d}. Let $\mathbb{P}_x$ denote the law of the process $\Phi$ with initial condition $\Phi(0)=x$, $x\in \SP$.

We assume that
the functions  $\lambda^i$ and $p_\theta^i$ satisfy the following:
\begin{enumerate}[(i)]
    \item\label{A1} their values depend only on $w$, $g$, and $\zeta$ in each case (there is no direct influence of herbivores on fire ignition nor severity),
    \item\label{A2} $\lambda^i$ is strictly positive in each season (fires should be always possible but of course much more probable during the dry season),
    \item\label{A3} there are $a_w, a_g\in (0,1]$  and $\varepsilon_w,\varepsilon_g>0$ such that
    \begin{equation}\label{c:lam}
   \lambda^i(w,g,\zeta) \int_{\Theta}\left[ \frac{1}{(1-\theta_w)^{a_w}}  -1\right] p_\theta^i(w,g,\zeta)\nu_i(d\theta)  -a_w r_w^i\le -\varepsilon_w
    \end{equation}
    for all $\zeta\in [0,\zeta_m^i)$, $g\in (0,1]$ and $w$ from a neighbourhood of $0$,
    and
    \begin{multline}\label{c:lamg}
    \lambda^i(w,g,\zeta) \int_{\Theta}\left[ \frac{1}{(1-\theta_g)^{a_g}}  -1 +g^{a_g}\ln\frac{1-w}{1-(1-\theta_w)w}\right] p_\theta^i(w,g,\zeta)\nu_i(d\theta)\\
    -a_g r_g^i(1-w)\le -\varepsilon_g
    \end{multline}
    for all $\zeta\in [0,\zeta_m^i)$, $w\in (0,1)$ and $g$ from a neighbourhood of $0$,
    \item\label{A4} for $a=(a_w,a_g)$ as in \eqref{A3} we have
\begin{equation*}
\int_0^1\left[\frac{1}{(1-\theta_w)^{a_w}}+\frac{1}{(1-\theta_g)^{a_g}}-\ln(1-(1-\theta_w)w)\right]p_\theta^i(w,g,\zeta)\nu_i(d\theta)<\infty
\end{equation*}
for all $(w,g)\in (0,1)\times(0,1]$, $\zeta\in [0,\zeta_m^i).$
\end{enumerate}
Conditions  \eqref{A3}–\eqref{A4} are technical assumptions allowing a construction of a Lyapunov function controlling survival of woods and grasses (the behaviour of the process when $w$ or $g$ are close to zero). In particular, conditions \eqref{c:lam} and \eqref{c:lamg}  prevent the total loss of wood and grass biomasses, respectively.

\begin{theorem}\label{th:main} Suppose that \eqref{A1}–\eqref{A4} hold. Then for each  $x=(\xi,\zeta,i)\in \SP$ there exists a probability measure $\Pi(x,\cdot)$ on $\SP$ such that
\[
\lim_{t\to \infty}\frac{1}{t}\int_0^t \mathbb{P}_x(\Phi(s)\in B)ds=\Pi(x,B), \quad \text{for all }B\in \B,
\]
and for any bounded Borel measurable $f$ we have
\[
\mathbb{P}_x\left(\lim_{t\to\infty}\frac{1}{t}\int_0^t f(\Phi(s))ds=\int f d\tilde\Pi\right)=1
\]
for a random measure $\tilde\Pi$ satisfying $\Pi(x,B)=\mathbb{E}_x\tilde\Pi(B)$, $B\in \B$, $x\in \SP$.
\end{theorem}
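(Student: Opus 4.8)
The plan is to capitalize on the fact that, once the state is augmented to $x=(\xi,\zeta,i)$, the seasonally switching dynamics becomes a genuinely time-homogeneous Markov process $\Phi$ on $\SP$, so that the periodic nonstationarity is absorbed into the state space. The entire statement will then be obtained by combining two results: the abstract mean-ergodic theorem for $T$-processes satisfying a Foster--Lyapunov drift condition (Theorem~\ref{t:mean}) with the verification that each savanna PDMP belongs to this class (Theorem~\ref{th:mainVT}). Concretely, I would first record that $\Phi$ is a Borel right process whose extended generator splits into the transport part generated by the flow $\phi_t$ of \eqref{d:phi} and the jump part built from the rate $q=\lambda^i$, the survival function $F_x$ of \eqref{d:tail}, and the kernel $\jump$ of \eqref{d:S}--\eqref{d:nu}; everything afterwards is a structural consequence of these data together with \eqref{A1}--\eqref{A4}.

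The substantive part of Theorem~\ref{th:mainVT} that I expect to be the main obstacle is the $T$-process property, i.e.\ producing a nontrivial kernel that is lower semicontinuous in the starting point and minorizes the resolvent of $\Phi$. The smoothing that makes this possible comes from the fires: when $\nu^i$ is Lebesgue on $\Theta=(0,1)^2$, the map $\theta\mapsto S^i_\theta(\xi)$ weighted by the continuous density $p^i_\theta$ spreads the post-fire law over a set with nonempty interior, and the continuous dependence of the flows $\varphi^i_t$ on initial data propagates this regularity along trajectories; in the degenerate Dirac case the continuity of $\phi_t$ and of the distribution of firing times already supplies a continuous component. The delicate point is to combine this jump-induced regularity with the deterministic resetting of the clock $\zeta$ at season boundaries so that the minorizing kernel is genuinely nondegenerate on all of $\SP$ and not merely on a single seasonal fibre.

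Next I would establish the Foster--Lyapunov condition (CD2) by exhibiting a norm-like function $V$ on $\SP$ whose extended generator satisfies $\mathcal{A}V\le -\varepsilon V$ off a compact set and whose sublevel sets are petite. Assumptions \eqref{A3}--\eqref{A4} are tailored precisely for this: a function with leading singular terms $w^{-a_w}$ and $g^{-a_g}$, blowing up at the extinction boundaries $w\to 0$ and $g\to 0$ (the latter regime also being forced when $w\to 1$ and grass is outcompeted), has a transport drift contributing $-a_w r_w^i$ and $-a_g r_g^i(1-w)$ and a jump drift given exactly by the integrals in \eqref{c:lam} and \eqref{c:lamg}, the cross term there encoding the coupling of the two singularities through the simultaneous action of a fire on $w$ and $g$; the integrability required for the jump part is guaranteed by \eqref{A4}. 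The resulting bound $\mathcal{A}V\le-\varepsilon V+\text{const}$ certifies in particular that $\Phi$ is non-evanescent and bounded in probability on average.

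With the $T$-process property and (CD2) in hand, Theorem~\ref{t:mean} applies and delivers both conclusions simultaneously. It yields the Ces\`aro convergence of the marginals to a probability measure $\Pi(x,\cdot)$ --- and it is worth stressing that one cannot upgrade this to convergence of $\mathbb{P}_x(\Phi(t)\in B)$ itself, since the deterministic clock $\zeta$ forces the marginals to oscillate with the seasonal period, which is exactly why the statement is phrased through time averages. The accompanying strong law, $\frac1t\int_0^t f(\Phi(s))\,ds\to\int f\,d\tilde\Pi$ almost surely, is the ergodic-decomposition part of Theorem~\ref{t:mean}, with $\tilde\Pi$ the random limiting occupation measure and $\Pi(x,B)=\mathbb{E}_x\tilde\Pi(B)$ its $\mathbb{P}_x$-mean; should the invariant probability measure turn out to be unique, $\tilde\Pi$ is almost surely deterministic and independent of $x$. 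This reduces Theorem~\ref{th:main} to Theorems~\ref{t:mean} and~\ref{th:mainVT}, whose own proofs (the abstract one carried out in Section~\ref{s:proof}) are the genuine mathematical core.
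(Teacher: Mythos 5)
Your top-level architecture is exactly the paper's: Theorem~\ref{th:main} is obtained there by quoting Theorem~\ref{t:mean} together with Theorem~\ref{th:mainVT}, and your sketch of the Foster--Lyapunov part also matches the paper's construction closely (the paper takes $V_1=w^{-a_w}+g^{-a_g}-\ln(1-w)+\zeta\sqrt{\zeta_m^i-\zeta}$, whose transport drift produces the terms $-a_wr_w^i$ and $-a_gr_g^i(1-w)$ and whose jump increments are precisely the integrands in \eqref{c:lam}--\eqref{c:lamg}, with \eqref{A4} giving the required integrability). The problem is the step you yourself flag as the main obstacle: your mechanism for the $T$-process property is not the paper's and, as described, it does not work for the models the theorem covers.

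You derive the continuous component from randomness of the fire \emph{magnitudes}: when $\nu^i$ is Lebesgue, the map $\theta\mapsto S^i_\theta(\xi)$ spreads the post-fire law over a set with nonempty interior. But \eqref{d:amfi} explicitly allows $\nu^i=\delta_{(M_w^i,M_g^i)}$ (constant fractional losses --- the case of Corollary~\ref{cor} and of the original model being extended), where the magnitudes carry no randomness whatsoever; and your fallback for that case, that ``continuity of $\phi_t$ and of the distribution of firing times already supplies a continuous component,'' is an assertion, not an argument. With deterministic jump maps the only randomness is the vector of jump times, and the resulting laws are carried by images of the simplex $\{t_1+t_2<t\}$ under the composition $\mathbf{t}\mapsto\varphi^i_{t-(t_1+t_2)}\circ S^i_{\theta_2}\circ\varphi^i_{t_2}\circ S^i_{\theta_1}\circ\varphi^i_{t_1}(\xi)$; these are lower-dimensional sets unless one proves this map is a submersion. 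That rank computation is the actual core of the paper's proof of Theorem~\ref{th:mainVT}: two fires plus the season exit, \cite[Lemma 6.3]{benaim12}, and an explicit $2\times2$ determinant which is nonzero precisely when \eqref{e:detA2} fails. Moreover, even in the Lebesgue case your mechanism breaks down for the herbivore model: by \eqref{d:Stheta4d} fires act only on $(w,g)$ and leave $(h_G,h_B)$ untouched, so a fire spreads mass only in a $3$-dimensional slice of the $5$-dimensional seasonal fibre, and ``continuous dependence of the flows'' does not by itself push that randomness into the herbivore directions. The paper needs $n=5$ jumps (two fires in each of two seasons plus the switch), the vectors $v_1,\dots,v_4$, and the rank-$5$ condition encoded in \eqref{e:detA2} and \eqref{e:detA4} to get nondegeneracy on all of $\SP$. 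Without these (or equivalent) rank verifications, the $T$-process hypothesis of Theorem~\ref{t:mean} is unproved and your reduction of Theorem~\ref{th:main} is incomplete.
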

The proof of Theorem~\ref{th:main} will be given in the next section. In fact we will show that the convergence in  Theorem~\ref{th:main} is uniform with respect to all sets $B$ and that our savanna models are $T$-processes satisfying a  Foster–Lyapunov type condition (see Theorem~\ref{th:mainVT}).

We finish the section with the conclusion regarding the model from \cite{klimasara2018model} extended by inclusion of seasonality and (possibly) herbivore activity.
\begin{corollary}\label{cor} Suppose that  the losses are constant fractions  ($M_w^i, M_g^i$) of the tree/grass biomass and that $\lambda^i(w,g,\zeta)=\lambda_0^i g$ with $\lambda_0^i>0$, $i=0,1$. If
\begin{equation}\label{c:inra}
r_w^i  +\lambda_0^i \ln(1-M_w^i)  >0, \quad i=0,1,
     \end{equation}
then  Theorem \ref{th:main}  Condition holds.
\end{corollary}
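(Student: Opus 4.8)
The plan is to deduce the corollary directly from Theorem~\ref{th:main}, by checking that the special choices in its hypotheses --- constant-fraction losses, encoded by $\nu^i=\delta_{(M_w^i,M_g^i)}$ and $p_\theta^i\equiv 1$, together with $\lambda^i(w,g,\zeta)=\lambda_0^i g$ --- force the four conditions \eqref{A1}--\eqref{A4}. The decisive simplification is that every integral over $\Theta$ collapses to evaluation at the single point $\theta=(M_w^i,M_g^i)$, so each condition becomes an explicit inequality in $(w,g,a_w,a_g)$ that no longer involves $\zeta$ at all (the clock enters neither the rate nor the losses).

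Conditions \eqref{A1}, \eqref{A2} and \eqref{A4} will be immediate: $\lambda^i=\lambda_0^i g$ and $p^i_\theta\equiv 1$ depend only on $g$, $\lambda_0^i g>0$ on $g\in(0,1]$, and the integrand in \eqref{A4} is a single finite number because $M_w^i,M_g^i\in(0,1)$ and $1-(1-M_w^i)w>M_w^i>0$ for $w\in(0,1)$. For the wood part \eqref{c:lam} the collapsed inequality reads $\lambda_0^i g\bigl[(1-M_w^i)^{-a_w}-1\bigr]-a_w r_w^i\le-\varepsilon_w$, and since the bracket is positive the worst case is $g=1$. I would therefore set $f_i(a):=\lambda_0^i\bigl[(1-M_w^i)^{-a}-1\bigr]-a\,r_w^i$ and record that $f_i(0)=0$ while $f_i'(0)=-\bigl(r_w^i+\lambda_0^i\ln(1-M_w^i)\bigr)$, which is strictly negative \emph{exactly} by hypothesis \eqref{c:inra}. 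Hence $f_i(a)<0$ for all small $a$; choosing a common $a_w$ for $i=0,1$ and $\varepsilon_w:=-\tfrac12\max_i f_i(a_w)>0$ then settles \eqref{c:lam}. This is the only place where \eqref{c:inra} is used.

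The remaining and genuinely delicate point is the grass part \eqref{c:lamg}, which after collapsing reads
\[
\lambda_0^i g\bigl[(1-M_g^i)^{-a_g}-1\bigr]+\lambda_0^i g^{\,1+a_g}\ln\frac{1-w}{1-(1-M_w^i)w}-a_g r_g^i(1-w)\le-\varepsilon_g
\]
for all $w\in(0,1)$ and all small $g$. The first (fire) term is positive but carries a factor $g$ that vanishes with $g$; the last (growth) term is the favourable one but degenerates as $w\to1$; and the middle coupling term is negative and blows up logarithmically as $w\to1$, precisely to compensate for the loss of the growth term there. For $w$ in any fixed range $w\le 1-\eta$ the growth term alone dominates once $g$ is small, so the real issue is uniformity in the boundary layer $w\to1$. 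My plan is to treat that layer by the substitution $1-w=\delta$ and compare the competing orders directly, using the logarithmic coupling term to absorb the positive fire term. The hard part --- and where I expect to concentrate the analysis --- is the simultaneous corner $g\to0,\ w\to1$, where all three terms degenerate at once; there the sign of the left-hand side is decided by a careful matched estimate of their relative orders in $g$ and $\delta$, and it is exactly this estimate (equivalently, whether the logarithmic coupling is strong enough to beat the fire term for small $g$) that constitutes the main obstacle.

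Once \eqref{A1}--\eqref{A4} are in force, Theorem~\ref{th:main} applies verbatim and delivers both the Ces\`aro convergence of the one-dimensional distributions to $\Pi(x,\cdot)$ and the almost-sure convergence of the time averages to $\int f\,d\tilde\Pi$, which is precisely the assertion of the corollary; no separate ergodicity argument is needed.
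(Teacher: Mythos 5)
Your verification of \eqref{A1}, \eqref{A2}, \eqref{A4} and of the wood inequality \eqref{c:lam} is correct, and it is exactly the paper's argument: the paper likewise reduces \eqref{c:lam} to finding $a_w\in(0,1]$ with $\lambda_0^i\bigl[(1-M_w^i)^{-a_w}-1\bigr]-a_w r_w^i<0$, which is your derivative-at-zero computation, and \eqref{c:inra} enters only there. The genuine gap is the grass inequality \eqref{c:lamg}: you leave it unproved, and the plan you outline for the corner $w\to1$, $g\to0$ (let the logarithmic coupling term absorb the fire term) cannot be carried out. Indeed, put $w=1-g^2$ and let $g\downarrow0$. Since $M_w^i+(1-M_w^i)g^2\le 1$, the coupling term is bounded below by $2\lambda_0^i g^{1+a_g}\ln g$, so the left-hand side of \eqref{c:lamg} is at least
\[
g\left(\lambda_0^i\left[\frac{1}{(1-M_g^i)^{a_g}}-1\right]+2\lambda_0^i g^{a_g}\ln g-a_g r_g^i g\right),
\]
and since $g^{a_g}|\ln g|\to0$ for every $a_g>0$, the bracket converges to $\lambda_0^i\bigl[(1-M_g^i)^{-a_g}-1\bigr]>0$. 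Hence the left-hand side of \eqref{c:lamg} is strictly positive at $(w,g)=(1-g^2,g)$ for all sufficiently small $g$, whatever $a_g\in(0,1]$ and $\varepsilon_g>0$ you choose: in this corner the coupling term is $o(g)$ (it blows up only logarithmically in $1-w$ and carries the extra factor $g^{a_g}$), while the fire term is of exact order $g$. So \eqref{c:lamg}, which condition \eqref{A3} demands uniformly over $w\in(0,1)$ and over a fixed neighbourhood of $g=0$, fails for this model; no matched asymptotic estimate can reverse that sign.

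For comparison: the paper's own proof does not surmount this difficulty either. It rewrites the left-hand side exactly as you did and then asserts that for $w\in(0,1)$ and $g$ from a neighbourhood of $0$ ``it is always negative''. That is true for each \emph{fixed} $w$, with a neighbourhood of $g=0$ that shrinks as $w\to1$, but it is false uniformly, as the computation above shows; and uniformity is what is used later, when the proof of Theorem~\ref{th:mainVT} claims $\EL V\to-\infty$ as $\xi$ approaches the boundary of $\SP_i$ (along $w=1-g^2$, $\zeta=0$, one checks that $\EL V_1$ tends to a limit at least $r_w^i+\sqrt{\zeta_m^i}>0$). So the obstacle you honestly isolated is not a defect of your write-up alone: hypothesis \eqref{c:inra} constrains only $r_w^i,\lambda_0^i,M_w^i$ and cannot control the balance between the order-$g$ fire term and the $o(g)$ coupling term near the woodland corner $(w,g)=(1,0)$. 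Closing the corollary would require an additional assumption involving $r_g^i$, $M_g^i$, $M_w^i$, $\lambda_0^i$ (or a reformulated, non-uniform version of \eqref{A3} supported by a different Lyapunov construction), not a sharper estimate within your plan. As it stands, your proof is incomplete at precisely the step that is also unjustified in the paper.
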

\begin{proof}  From condition \eqref{c:inra} it follows that
there exists $a_w\in (0,1]$ such that
   \[
   \lambda_0^i \left[ \frac{1}{(1-M_w^i)^{a_w}}  -1\right]  -a_w r_w^i<0, \quad i=0,1,
    \]
implying condition \eqref{c:lam}.
Now observe that the left-hand side of \eqref{c:lamg} is of the form
\[
    \lambda_0^ig \left[ \frac{1}{(1-M_g^i)^{a_g}}  -1 +g^{a_g}\ln\frac{1-w}{1-(1-M_w^i)w}\right] -a_g r_g^i(1-w)
\]
and, for $w\in(0,1)$ and $g$   from a neighbourhood of $0$, it is always negative. Consequently,  assumptions  \eqref{A1}–\eqref{A4} are satisfied.
\end{proof}

\begin{remark}
In the simplest model  as in Corollary \ref{cor} note that condition  \eqref{c:inra}   implies that $r_w^i  +\lambda_0^i g \ln(1-M_w^i)  >0$ for all $g\in(0,1]$, $i=0,1$. Thus the mean growth rate of wood biomass is positive in the limit $w\to 0$ in both seasons allowing wood-grass coexistence (in the presence of random fires). \end{remark}
\section{Mean ergodic Markov processes}\label{s:mean}

Following 
\cite{meyn_tweedieI,meyn_tweedieII,meyn_tweedieIII}, we summarize briefly necessary concepts to study the long time behavior of Markov processes.
Let $\SP$ be a locally compact separable metric space and let $\B$ denote the Borel subsets of $\SP$.
A function $T\colon \SP\times \B\to [0,1]$ is called a \emph{(substochastic) kernel} on $\SP$ if for $B\in \B$ the function $T(\cdot,B)$ is measurable and $T(x,\cdot)$ is a measure on $\B$ (satisfying $T(x,\SP)\le 1$ for each $x\in \SP$). The kernel is called \emph{non-trivial} if $T(x,\SP)>0$ for all $x\in \SP$ and \emph{stochastic} if $T(x,\SP)=1$ for all $x$.
A substochastic kernel $T$ defines a linear operator on the space of finite signed measures $\mathcal{M}(\SP)$ on $\B$.
For $\mu\in\mathcal{M}(\SP)$ we define a new signed measure $\mu T$ by
\[
\mu T(B)=\int_\SP T(x,B)\mu(dx).
\]
If $K$ and $T$ are two kernels their product $KT$ is defined as
\[
KT(x,B)=\int_{\SP}T(y,B)K(x,dy),\quad x\in \SP, B\in \B.
\]
A kernel $T$ is called a \emph{continuous component} of a  kernel $K$ on $\SP$ if it satisfies \linebreak $K(x,B)\ge T(x,B)$ for all $x\in \SP$, $B\in \B$ and the function $T(\cdot,B)$ is \emph{lower semicontinuous}, i.e.
\[
\liminf_{y\to x}T(y,B)\ge T(x,B), \quad x\in \SP.
\]

Let $\Phi=\{\Phi(t):t\ge 0\}$ be a continuous-time Markov process with state space $\SP$ and let $\mathbb{P}_x$ denote the law of the process $\Phi$ with initial condition $\Phi(0)=x$, $x\in \SP$.  We assume that $\Phi$ is strong Markov and has right-continuous sample paths with left limits. For each $t\ge 0$ the transition probability of the process is
\[
P^t(x,B)=\mathbb{P}_x\big(\Phi(t)\in B\big), \quad x\in \SP,B\in \B,
\]
and if the process is non-explosive then $P^t$ is a stochastic kernel.
Recall that the process $\Phi$ is \emph{non-explosive} if  there is an increasing sequence of open precompact sets $O_n$ such that $\SP=\bigcup_{n}O_n$ and for each $x\in \SP$ we have
\[
\mathbb{P}_x\big(\lim_{n\to \infty}\inf\{t\ge 0:\Phi(t)\not\in O_n\}=\infty\big)=1.
\]

An operator  $\EL$ is called the \emph{extended generator} of the Markov process $\Phi$   (see \cite{davis93}), if
its domain  $\mathcal{D}(\EL)$ consists of those measurable $V\colon \SP\to \mathbb{R}$ for which there exists a measurable $W\colon \SP\to \mathbb{R}$ such that the function $t\mapsto W\big(\Phi(t)\big)$ is integrable $\mathbb{P}_x$-a.s. for each $x\in \SP$ with the process
\[
t\mapsto V\big(\Phi(t)\big)-V(x)-\int_{0}^t W\big(\Phi(s)\big)\,ds
\]
being a $\mathbb{P}_x$-local martingale  and we define $\mathcal{L}V=W$.
A function $V\colon \SP\to [0,\infty]$ is said to be \emph{norm-like} if the sets $\{x\in \SP:V(x)\le r\}$ are precompact for all sufficiently large $r>0$.
It follows from \cite[Theorem 2.1]{meyn_tweedieIII} that if there exists a norm-like function $V\in \mathcal{D}(\EL)$ and constants $c,d\ge 0$ such that
\begin{equation}
\EL V(x)\le cV(x)+d, \quad x\in \SP
\end{equation}
then the process $\Phi$ is non-explosive.

For any $\mu\in \mathcal{M}(\SP)$ we define
the norm
\[
\|\mu\|=\sup_{B\in \B}|\mu(B)|,\quad \mu\in \mathcal{M}(\SP).
\]
It is equivalent to the total variation norm since we
have
$
\|\mu\|\le\|\mu\|_{TV}\le 2\|\mu\|.
$
The process $\Phi$ is called  \emph{Ces\'aro-ergodic} (or \emph{mean ergodic})
if for each probability measure $\mu$ there exists a measure $\mu\Pi \in \mathcal{M}(\SP)$ such that \begin{equation}\label{c:meg}
\lim_{t\to\infty}\left\|\frac{1}{t}\int_0^t \mu P^s(\cdot)ds -\mu \Pi\right\|=0.
\end{equation}
In that case we define
\[
\Pi(x,B)=\delta_x\Pi(B), \quad B\in \B, x\in \SP,
\]
where $\delta_x$ is the Dirac delta.
Recall that a probability measure $\pi$ is called \emph{invariant for the process} $\Phi$ if $\pi=\pi P^t$ for all $t$. In particular, each limiting measure $\mu\Pi$ in \eqref{c:meg} is invariant for the process $\Phi$.
Finally, the process $\Phi$ is called a \emph{$T$-process} if for some probability measure $a$ on $\mathbb{R}_+$ the kernel $K_a$ defined by
\begin{equation}\label{d:ka}
K_a(x,B)=\int_{0}^\infty P^t(x,B)a(dt).
\end{equation}
has a non-trivial continuous component.

We now impose a Foster--Lyapunov type condition corresponding to condition (CD2) in  \cite{meyn_tweedieIII}:
\begin{enumerate}[(V)]
\item\label{V} There exist a non-negative norm-like  $V\in \mathcal{D}(\EL)$, a measurable $f\colon \SP\to [1,\infty)$, a~compact set $C$ and positive constants $c,d$ such that
\begin{equation}\label{e:F-L}
\EL V(x)\le -c f(x)+d \mathbf{1}_{C}(x),\quad x\in \SP.
\end{equation}
\end{enumerate}

\begin{theorem}\label{t:mean}
Suppose that condition (V) holds and that the process $\Phi$ is a~$T$-process. Then $\Phi$
is mean ergodic and we have
\[
\mathbb{P}_x\left(\lim_{t\to\infty}\frac{1}{t}\int_0^t f(\Phi(s))ds=\int f d\tilde\Pi\right)=1
\]
for any bounded Borel measurable $f$
 and
for a random measure $\tilde\Pi$ satisfying  $\Pi(x,B)=\mathbb{E}_x\tilde\Pi(B)$, $B\in \B$, $x\in \SP$.
\end{theorem}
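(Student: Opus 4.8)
The plan is to treat condition (V) as a continuous-time Foster--Lyapunov inequality and to feed its consequences into the $T$-process machinery of \cite{tweedie79,meyn_tweedieII,meyn_tweedieIII}, the only genuinely new ingredient being an ergodic decomposition that manufactures the \emph{random} limit measure $\tilde\Pi$ in the absence of irreducibility. First I would extract the quantitative content of (V). Since $V\in\mathcal D(\EL)$, the process $t\mapsto V(\Phi(t))-V(x)-\int_0^t\EL V(\Phi(s))\,ds$ is a $\mathbb P_x$-local martingale; localising along the precompact sets $O_n$ on which $V$ is bounded, applying Dynkin's identity and passing to the limit with Fatou, the bound \eqref{e:F-L} gives
\[
c\,\mathbb E_x\!\int_0^t f(\Phi(s))\,ds\le V(x)+d\,\mathbb E_x\!\int_0^t\mathbf 1_C(\Phi(s))\,ds\le V(x)+dt .
\]
Because $V\ge 0$ and $f\ge 1$, this simultaneously yields non-explosiveness (as already noted after the definition of $\EL$) and, dividing by $t$ and letting $t\to\infty$, the lower bound $\liminf_{t\to\infty}\frac1t\int_0^t P^s(x,C)\,ds\ge c/d>0$. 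In particular the process returns to the fixed compact set $C$ with positive asymptotic frequency from every starting point, so $\Phi$ is \emph{non-evanescent}.

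Next I would invoke the $T$-process structure. For a $T$-process the continuous component of the sampled kernel $K_a$ plays the role that the Feller property plays for topological recurrence, and the Meyn--Tweedie dichotomy classifies such a process as either evanescent or Harris recurrent; having ruled out evanescence, $\Phi$ is Harris recurrent. The strengthened drift \eqref{e:F-L} with $f\ge 1$ is exactly the positivity condition (CD2): it forces any subsequential weak limit of the Ces\`aro occupation measures $\pi^x_t(\cdot):=\frac1t\int_0^t P^s(x,\cdot)\,ds$ to be an invariant \emph{probability} measure (no mass escapes, by the lower bound on $\pi^x_t(C)$ together with the continuous component locating limit points), and it gives boundedness in probability on average. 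I would then quote the ergodic theorem for $T$-processes from \cite{meyn_tweedieII}, which upgrades this to convergence in the total-variation-equivalent norm $\|\cdot\|$, that is $\|\pi^x_t-\Pi(x,\cdot)\|\to 0$, with each $\Pi(x,\cdot)$ invariant; by linearity $\frac1t\int_0^t\mu P^s\,ds\to\mu\Pi$ for every probability measure $\mu$, which is mean ergodicity \eqref{c:meg}.

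For the pathwise statement I would use regeneration at visits to $C$, which is petite for the $T$-process, to run a strong law of large numbers for the additive functionals $\int_0^t f(\Phi(s))\,ds$ with $f$ bounded Borel. Harris recurrence gives almost sure convergence of the time averages, but without irreducibility the limit is not deterministic: it is measurable with respect to the invariant $\sigma$-field, and the ergodic decomposition represents it as $\int f\,d\tilde\Pi$ for a random invariant probability measure $\tilde\Pi$ (obtained by applying the pathwise limit to $f=\mathbf 1_B$ and checking countable consistency to build a random measure). Taking $f=\mathbf 1_B$ and integrating the almost sure convergence $\frac1t\int_0^t\mathbf 1_B(\Phi(s))\,ds\to\tilde\Pi(B)$ against $\mathbb P_x$ by bounded convergence recovers $\lim_t\pi^x_t(B)=\mathbb E_x\tilde\Pi(B)$, which matches the mean-ergodic limit from the previous step and hence forces the identity $\Pi(x,B)=\mathbb E_x\tilde\Pi(B)$.

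The main obstacle is precisely the lack of $\psi$-irreducibility: the standard Meyn--Tweedie ergodic theorems are phrased for irreducible chains and deliver a \emph{unique} invariant measure and convergence of $P^t(x,\cdot)$ itself, whereas here several ergodic classes may coexist, which is why only Ces\`aro convergence is asserted and why the limit along paths must be encoded in the random measure $\tilde\Pi$. The delicate work is to make the $T$-process condition — through the non-trivial continuous component of $K_a$ — substitute for irreducibility at two points: in showing that subsequential limits of $\pi^x_t$ cannot oscillate (so that norm convergence, not merely weak subsequential convergence, holds), and in carrying out the ergodic decomposition so that the almost sure pathwise limit is genuinely given by integration against an invariant probability measure rather than an arbitrary invariant-measurable functional. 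Verifying the Dynkin identity rigorously for the merely locally bounded norm-like $V$ is a routine but necessary preliminary.
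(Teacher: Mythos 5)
Your proposal assembles the right references, but the pivotal step is wrong as stated, and the part you postpone is precisely the content of the theorem. You conclude from the Meyn--Tweedie dichotomy that, evanescence being ruled out, $\Phi$ is Harris recurrent. That dichotomy is proved in \cite{meyn_tweedieII} only for \emph{$\psi$-irreducible} $T$-processes, and no irreducibility is assumed in Theorem~\ref{t:mean} --- indeed the whole formulation (limits $\Pi(x,\cdot)$ depending on $x$, a \emph{random} limit measure $\tilde\Pi$) exists because several ergodic classes may coexist, in which case $\Phi$ is not Harris recurrent. Your closing paragraph concedes this, but then merely labels as ``delicate work'' what is in fact the proof. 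The actual argument (Lemma~\ref{t:dec} and Remark~\ref{r:pi}) replaces the dichotomy by the Doeblin decomposition for non-evanescent $T$-processes, \cite[Theorem 4.1]{meyn_tweedieII} sharpened under the drift condition (V) by \cite[Theorem 4.6]{meyn_tweedieIII}: $\SP=\bigcup_{i=1}^N H_i\cup E$ with \emph{finitely many} positive Harris sets $H_i$ and $\mathbb{P}_x\{\eta_H=\infty\}=1$ for all $x$. The limit objects are then built by hand, not extracted from an abstract ergodic decomposition: $\Pi(x,B)=\sum_{i=1}^N\pi_i(B\cap H_i)\mathbb{P}_x\{\breve{\tau}_{H_i}<\infty\}$ and $\tilde\Pi(B)=\sum_{i=1}^N\mathbf{1}(\breve{\tau}_{H_i}<\infty)\,\pi_i(B\cap H_i)$, a finite mixture indexed by which Harris set the trajectory enters. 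Moreover, the ``ergodic theorem for $T$-processes'' you propose to quote from \cite{meyn_tweedieII} is not available there in continuous time; what exists is the discrete-time \cite[Theorem 7.1]{meyn_tweedieI}, and adapting it to the present setting is exactly what the paper does, so quoting such a theorem would be circular.

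There are two further gaps. First, non-evanescence does not follow from your occupation bound: $\liminf_{t\to\infty}\frac1t\int_0^t P^s(x,C)\,ds\ge c/d$ is a statement in expectation, while on the event $\{\Phi\to\infty\}$ the occupation ratio of the compact set $C$ tends to $0$ pathwise, so by bounded convergence your estimate only yields $\mathbb{P}_x\{\Phi\to\infty\}\le 1-c/d<1$, not $\mathbb{P}_x\{\Phi\to\infty\}=0$. The paper instead observes that (V) implies (CD1) (namely $\EL V\le d\mathbf{1}_C$ with $V$ norm-like) and invokes \cite[Theorem 3.1]{meyn_tweedieII}, whose proof is a supermartingale argument rather than an occupation-time estimate. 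Second, you run regeneration and a strong law directly in continuous time, using that $C$ is ``petite''; but petiteness and regeneration via minorization are again tools of the irreducible theory, and the discrete-time Harris machinery you need (e.g.\ \cite[Theorem 1.2]{lasserre01}) must be transported to $\Phi$. The paper supplies this bridge through the resolvent $R$-chain: Lemma~\ref{p:tch} (the $T$-property and evanescence transfer between $\Phi$ and the $R$-chain), Lemma~\ref{t:meane} (mean ergodicity and pathwise averages transfer, via Abel--Ces\`aro equivalences \cite{krengel85,emilion} and \cite[Theorem 5.1.1]{bremaud92}), together with \cite{azema67,costa2006} identifying invariant measures and Harris sets of the process with those of the chain. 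Without the decomposition step and this continuous/discrete-time bridge, your argument does not close.
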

The proof of Theorem~\ref{t:mean} is given in Section~\ref{s:proof}.  We have the following direct consequence of Theorem~\ref{t:mean}.

\begin{corollary}
Suppose that condition (V) holds and that the process $\Phi$ is a~$T$-process with a unique invariant probability measure $\pi$. Then \begin{equation*}
\lim_{t\to\infty}\sup_{B\in \B}\left|\frac{1}{t}\int_0^t P^s(x,B)ds -\pi(B)\right|=0
\end{equation*}
and
\[
\mathbb{P}_x\left(\lim_{t\to\infty}\frac{1}{t}\int_0^t f(\Phi(s))ds=\int f d\pi\right)=1
\]
for all  $x\in \SP$  and all bounded Borel measurable $f$.
\end{corollary}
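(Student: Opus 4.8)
The plan is to obtain both assertions from Theorem~\ref{t:mean} by using the uniqueness of $\pi$ to collapse every limiting object to $\pi$. First I would invoke Theorem~\ref{t:mean}, whose hypotheses are exactly condition (V) together with the $T$-process assumption: it yields that $\Phi$ is mean ergodic and that the pathwise Ces\`aro averages of $f(\Phi(s))$ converge a.s.\ to $\int f\,d\tilde\Pi$ for a random measure $\tilde\Pi$ with $\Pi(x,B)=\mathbb{E}_x\tilde\Pi(B)$. I would also record that (V) forces non-explosivity: since $f\ge 1$ we have $\EL V(x)\le -c f(x)+d\mathbf{1}_{C}(x)\le d$, so the drift inequality $\EL V\le 0\cdot V+d$ holds with a norm-like $V$ and the cited \cite[Theorem 2.1]{meyn_tweedieIII} applies; hence each $P^s$ is a stochastic kernel.

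For the first display, fix $x\in\SP$ and take $\mu=\delta_x$ in the definition \eqref{c:meg} of mean ergodicity. Because $P^s$ is stochastic, each Ces\`aro average $\frac1t\int_0^t P^s(x,\cdot)\,ds$ is a probability measure, and convergence in the norm $\|\cdot\|$ forces the limit $\delta_x\Pi=\Pi(x,\cdot)$ to be a probability measure as well (its total mass converges to $1$ and it is non-negative as a $\|\cdot\|$-limit of non-negative measures); moreover, as noted after \eqref{c:meg}, it is invariant for $\Phi$. By the assumed uniqueness of the invariant probability measure we conclude $\Pi(x,\cdot)=\pi$ for every $x$. Recalling that $\|\mu\|=\sup_{B\in\B}|\mu(B)|$, the mean-ergodicity statement with $\mu=\delta_x$ reads precisely
\[
\lim_{t\to\infty}\sup_{B\in\B}\left|\frac1t\int_0^t P^s(x,B)\,ds-\pi(B)\right|=0,
\]
which is the first claim.

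For the second display, the point is to identify the random measure $\tilde\Pi$. From the ergodic-decomposition structure underlying the proof of Theorem~\ref{t:mean}, $\tilde\Pi$ is $\mathbb{P}_x$-a.s.\ an invariant probability measure for $\Phi$; uniqueness then gives $\tilde\Pi=\pi$ $\mathbb{P}_x$-a.s., so $\int f\,d\tilde\Pi=\int f\,d\pi$ a.s.\ for every bounded Borel $f$, and the a.s.\ convergence furnished by Theorem~\ref{t:mean} becomes the asserted pathwise limit $\int f\,d\pi$.

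The reduction of the norm statement to the case $\mu=\delta_x$ is routine once non-explosivity is in hand. The only genuinely substantive point is the a.s.\ invariance of $\tilde\Pi$: one must know that the pathwise limit is not merely correct in expectation ($\mathbb{E}_x\tilde\Pi=\pi$, which by itself does not pin down $\tilde\Pi$) but is a.s.\ an invariant measure, for it is this fact that lets uniqueness force $\tilde\Pi=\pi$. I therefore expect the main obstacle to be extracting the a.s.\ invariance of $\tilde\Pi$ from the construction in Section~\ref{s:proof}, rather than any of the measure-theoretic bookkeeping above.
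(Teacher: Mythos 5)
Your proposal is correct and follows the route the paper intends: the corollary is stated there as a direct consequence of Theorem~\ref{t:mean}, obtained exactly as you do by taking $\mu=\delta_x$ in the mean-ergodicity statement (the Ces\`aro limit $\Pi(x,\cdot)$ being an invariant probability measure, hence $\pi$ by uniqueness) and by identifying the random measure $\tilde\Pi$ with $\pi$. The one point you flag as a potential obstacle---the $\mathbb{P}_x$-a.s.\ invariance of $\tilde\Pi$---is settled by the paper's Remark~\ref{r:pi}: there $\tilde{\Pi}(B)=\sum_{i=1}^N \mathbf{1}(\breve{\tau}_{H_i}<\infty)\,\pi_i(B\cap H_i)$, the positive Harris sets $H_i$ are disjoint, closed, and hit with total probability one by Lemma~\ref{t:dec}, and each $\pi_i(\cdot\cap H_i)$ is an invariant probability measure, so uniqueness forces $N=1$ and $\tilde\Pi=\pi$ almost surely.
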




Our next result, along with Theorem \ref{t:mean}, implies
Theorem~\ref{th:main}
and shows that savanna models from Sections \ref{s:model2d} and \ref{s:model4d} are mean ergodic.

\begin{theorem}\label{th:mainVT}
Under assumptions \eqref{A1}--\eqref{A4} the Markov processes from Sections \ref{s:model2d} and \ref{s:model4d} satisfy condition (V) and  are $T$-processes.
\end{theorem}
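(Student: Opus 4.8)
The plan is to produce an explicit norm-like Lyapunov function verifying condition (V), and then to exhibit a non-trivial continuous component of a resolvent kernel to get the $T$-process property. I would first record the action of the extended generator $\EL$ on functions $V$ depending on $\xi$ alone. Such a $V$ is constant in $\zeta$ and $i$, so the flow derivative reduces to $\nabla_\xi V\cdot b^i(\xi)$, and since the deterministic season switch on $\Gamma$ leaves $\xi$ unchanged, the boundary compatibility $V(\xi,\zeta_m^i,i)=V(\xi,0,1-i)$ holds automatically; thus $V\in\mathcal D(\EL)$ as soon as the fire-jump integrals are finite, which is exactly what \eqref{A4} supplies. For such $V$,
\[
\EL V(\xi,\zeta,i)=\nabla_\xi V(\xi)\cdot b^i(\xi)+\lambda^i(\xi,\zeta)\int_\Theta\big[V(S^i_\theta(\xi))-V(\xi)\big]p^i_\theta(\xi,\zeta)\nu^i(d\theta).
\]
The candidate is $V(\xi)=w^{-a_w}+g^{-a_g}-\ln(1-w)$ with $a_w,a_g$ as in \eqref{A3}, augmented in the herbivore case by $h_G+h_G^{-b_G}+h_B+h_B^{-b_B}$. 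It is norm-like because it diverges on every open part of $\partial\SP_i$: $w^{-a_w}\to\infty$ as $w\to0$, $-\ln(1-w)\to\infty$ as $w\to1$, $g^{-a_g}\to\infty$ as $g\to0$, and the herbivore terms diverge as $h_G,h_B\to0$ or $\infty$.

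The heart of the verification of condition (V) is a region-by-region drift estimate. Near $w=0$ the dominant term gives $\EL(w^{-a_w})=w^{-a_w}[\lambda^i\int(\frac{1}{(1-\theta_w)^{a_w}}-1)p^i_\theta\nu^i(d\theta)-a_w r_w^i(1-w)]$, which by \eqref{c:lam} is at most $-\tfrac{\varepsilon_w}{2}w^{-a_w}$ once $w$ is small. The delicate region is $g\to0$ with $w$ possibly near $1$, where the flow alone drives grass to extinction and only the fire-induced reduction of $w$ can compensate; this is exactly why $-\ln(1-w)$ is included. Computing $\EL(g^{-a_g}-\ln(1-w))$ and multiplying by $g^{a_g}$ reproduces the left-hand side of \eqref{c:lamg} up to the terms $a_g r_g^i g+g^{a_g}r_w^i w$, which vanish as $g\to0$; hence \eqref{c:lamg} yields $\EL(g^{-a_g}-\ln(1-w))\le-\tfrac{\varepsilon_g}{2}g^{-a_g}$ for small $g$. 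Near $w=1$ the jump part of $\EL(-\ln(1-w))$ tends to $-\infty$ like $\lambda^i\ln(1-w)$, using \eqref{A2}, which supplies the drift there. In the herbivore case \eqref{A1} and \eqref{d:Stheta4d} make fires act trivially on $h_G,h_B$, so $\EL$ reaches the herbivore terms through the flow only; the logistic form $e_g h_G(g-h_G)$, $e_w h_B(w-h_B)$ then renders $h_G,h_B$ self-stabilizing and yields negative drift for large or small herbivore densities. Patching these estimates with a large compact set $C$ produces $c,d>0$ and a norm-like $f\ge1$ with $\EL V\le-cf+d\mathbf 1_C$.

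For the $T$-process property I would exhibit a non-trivial continuous component of $K_a$ for a suitable $a$. Fixing $x$ and $t$ small enough that the season does not end, I restrict to trajectories with exactly one fire at a time $s\in(0,t)$, which occurs with PDMP density $\lambda^i(\phi_s(x))F_x(s)$ and magnitude $\theta$ drawn from $p^i_\theta\nu^i$, after which the flow runs for time $t-s$. Integrating the indicator of $B$ against these randomizations lower-bounds $P^t(x,B)$, and hence $K_a(x,B)$, by
\[
\int_0^t\!\!\int_\Theta \mathbf 1_B\big(\phi_{t-s}(S^i_\theta(\varphi^i_s(\xi)),\zeta+s,i)\big)\,p^i_\theta(\phi_s(x))\,\lambda^i(\phi_s(x))\,F_x(s)\,\nu^i(d\theta)\,ds.
\]
In the random-loss case $\nu^i$ is Lebesgue on $(0,1)^2$ with $p^i_\theta>0$, so the change of variables $\theta\mapsto S^i_\theta(\varphi^i_s(\xi))$ turns this into an integral against Lebesgue measure with a strictly positive integrand; the resulting kernel $T(x,\cdot)$ is absolutely continuous, satisfies $T(x,\SP)>0$ by \eqref{A2}, and is lower semicontinuous in $x$ by continuity of the flow, of $S^i_\theta$, of $\lambda^i$, and of $p^i_\theta$, via Fatou's lemma.

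The main obstacle is to obtain a genuine continuous component when a single fire fails to spread mass in all coordinates: in the four-dimensional model the herbivore variables are never hit by jumps, by \eqref{d:Stheta4d}, and are slaved to the ODE history, while in the constant-loss setting of the Corollary the jump magnitudes are deterministic, so the only randomness is in the fire times. In both cases I would pass to several consecutive fires and show that the map from the vector of fire times and magnitudes to $\Phi(t)$ is a submersion at some parameter value --- a Hörmander-type controllability statement exploiting the interaction of the flows $\varphi^i_t$ with the jump displacements and the couplings $h_G$--$g$ and $h_B$--$w$. Establishing this full-rank condition and checking that the induced density is lower semicontinuous and strictly positive on neighbourhoods is the technically demanding step; once it is in place, $K_a$ has a non-trivial continuous component, the process is a $T$-process, and together with condition (V) this yields the theorem.
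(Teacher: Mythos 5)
Your verification of condition (V) is essentially the paper's own argument: your $V$ is the paper's function $V_1$ (resp.\ $V_2$, up to a harmless change of the herbivore terms) with the summand $\zeta\sqrt{\zeta_m^i-\zeta}$ deleted, and your region-by-region drift estimates via \eqref{c:lam}, \eqref{c:lamg}, \eqref{A2}, \eqref{A4} are correct. But deleting the $\zeta$-term leaves a hole. The state space contains only $\zeta\in[0,\zeta_m^i)$, so every compact $C\subset\SP$ stays away from the seam $\zeta=\zeta_m^i$; your ``patching with a large compact set'' must therefore control points with $\xi$ in the interior of $\SP_i$ but $\zeta$ arbitrarily close to $\zeta_m^i$, and at such points your $\EL V$ need not be negative: the fire-jump contributions $w^{-a_w}\lambda^i\int_\Theta[(1-\theta_w)^{-a_w}-1]p^i_\theta\,\nu^i(d\theta)$ and the analogous $g$-term are strictly positive and nothing compensates them in the interior. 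The paper's term $\zeta\sqrt{\zeta_m^i-\zeta}$ exists precisely for this: it vanishes at $\zeta=0$ and $\zeta=\zeta_m^i$ (so the boundary condition $V(\xi,\zeta_m^i,i)=V(\xi,0,1-i)$ survives), while its $\zeta$-derivative tends to $-\infty$ as $\zeta\to\zeta_m^i$, forcing $\EL V\to-\infty$ near the seam as well. You must either add such a term or work on the quotient space in which $\Gamma$ is glued to the start of the next season so that compact sets may cross the seam; as written, the argument does not give (V).

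The more serious gap is in the $T$-process part. Your one-fire change-of-variables argument is valid, but only for the two-dimensional model with $\nu^i$ Lebesgue and $p^i_\theta>0$. You correctly observe that it fails for the four-dimensional model (jumps never move $h_G,h_B$, by \eqref{d:Stheta4d}) and in the constant-loss case (no randomness in $\theta$), and for those cases you only announce a strategy — several consecutive fires plus a full-rank condition — and explicitly defer ``the technically demanding step.'' That step is the actual content of the proof, and it is where the model-specific structure enters. The paper carries it out: for $d=2$ it takes two fires inside one season, computes the limiting Jacobian $A$ of the jump-time map, with columns such as $v_1=DS^i_{\theta_2}(\xi_1)DS^i_{\theta_1}(\xi_0)b^i(\xi_0)-b^i(\xi_2)$, and shows $\det A=0$ only under the algebraic relation \eqref{e:detA2}, which a suitable choice of $\boldsymbol{\theta}$ avoids; for $d=4$ it needs two fires in each of two consecutive seasons together with the deterministic season switch ($n=5$ jumps), the explicit vectors $v_1,\dots,v_4$ built from \eqref{d:Sb}, and the two non-degeneracy conditions \eqref{e:detA2} and \eqref{e:detA4}. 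Only after this does \cite[Lemma 6.3]{benaim12}, applied to the jump-time variables, yield the lower bound \eqref{d:lower} and hence a continuous component non-trivial at every point. Without these computations — indeed without even specifying how many jumps, distributed over which seasons, are needed to reach rank $d+1$ — the $T$-process claim, and with it the theorem, remains unproven.
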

\begin{proof}
We start by showing how condition (V) can be checked for our PDMP models.
Let $M(\SP)$ be the set of all measurable real-valued functions on $\SP$.  We define as in \cite{davis93} \[
M_\Gamma(\SP)=\{V\in M(\SP): V(x)=\lim_{t\downarrow 0}V\big(\phi_{-t}(x)\big) \text{ for } x\in \Gamma\}.\]
It can be shown as in the proof of \cite[Theorem 26.14]{davis93} and \cite[Theorem 18]{jacod} that the domain $\mathcal{D}(\EL)$ of the extended generator $\EL$ contains those functions $V\in M_\Gamma(\SP)$ that satisfy the following:
\begin{enumerate}
\item  the function $t\mapsto V\big(\phi_t(x)\big)$  is absolutely continuous on $[0,\life(x))$ for $x\in \SP$,
\item $V$ satisfies the boundary condition
\[
V(x)=\int_\SP V(y)\jump(x,dy), \quad x\in \Gamma,
\]
\item for each $x\in \SP$  and $t<\life(x)$
\begin{equation*}
\int_{0}^{t} \int_{\SP} \big|V(y)-V\big(\phi_s(x)\big)\big|P\big(\phi_s(x),dy\big)q\big(\phi_s(x)\big)ds <\infty.
\end{equation*}
\end{enumerate}

The formula for the extended generator $\EL$ is
\begin{equation*}
\EL V(x)=\EL_0V(x)+q(x)\int_\SP\big( V(y)-V(x)\big)P(x,dy),
\end{equation*}
where
\[
\EL_0V(x)=\lim_{t\downarrow 0}\frac{V\big(\phi_t(x)\big)-V(x)}{t}. \]

For $V\in \mathcal{D}(\EL)$ that is  a smooth function  of variables $\xi$ and $\zeta$ we have
\[
\EL V(\xi,\zeta,i)=\EL_0V(\xi,\zeta,i)+\lambda^i(\xi,\zeta)\int_{\Theta} \big(V\big(S^i_\theta(\xi),\zeta,i\big)-V(\xi,\zeta,i)\big)p_{\theta}^i(\xi,\zeta)\nu^i(d\theta),
\]
where
\begin{equation*}
\EL_0V(\xi,\zeta,i)=\sum_{j=1}^db_j^i(\xi)\frac{\partial V}{\partial \xi_j}(\xi,\zeta,i)+\frac{\partial V}{\partial \zeta}(\xi,\zeta,i),\quad \xi\in \SP_i,\zeta\in [0,\zeta_m^i), i=0,1,
\end{equation*}
and
the boundary condition is of the form
\begin{equation*}
V(\xi,\zeta_m^i,i)=V(\xi,0,1-i),\quad \xi\in \SP_i, i=0,1.
\end{equation*}
For $d=2$ and $\xi=(w,g)$ we take
\[
V_1(w,g,\zeta,i)=\frac{1}{w^{a_w}}+\frac{1}{g^{a_g}}-\ln(1-w)+\zeta\sqrt{\zeta_m^i-\zeta},
\]
while for $d=4$ and $\xi=(w,g,h_G,h_B)$ we consider
\[
V_2(w,g,h_G,h_B,\zeta,i)=V_1(w,g,\zeta,i)+\frac{1}{h_G} +\ln(1+h_G)+\frac{1}{h_B} +\ln(1+h_B).
\]
It is easily seen that both functions are in the domain of the corresponding extended generator.
Note that for $V=V_1$ and $V=V_2$ we have

\begin{align*}
V(S_\theta^i(\xi),\zeta,i)-V(\xi,\zeta,i)&=\frac{1}{w^{a_w}} \left[ \frac{1}{(1-\theta_w)^{a_w}}  -1\right]+\frac{1}{g^{a_g}} \left[ \frac{1}{(1-\theta_g)^{a_g}}  -1\right]\\
&\quad +\ln (1-w)-\ln(1-(1-\theta_w)w).
\end{align*}
Thus condition (V) holds, since $\EL V(\xi,\zeta,i)\to -\infty$ when $\xi$ tends to the boundary of $\SP_i$ or $\zeta\to \zeta_m^i$, by assumptions \eqref{A1} and \eqref{A3}.

Now we prove that the process $\Phi=\{\Phi(t):t\ge 0\}$ as in \eqref{d:Phi} is a  $T$-process. Since its probability transition function is given by
\begin{align*}
P^t(x,B)&=\mathbb{P}_x\big(\Phi(t)\in B\big)=\sum_{n=0}^\infty \mathbb{P}_x\big(\Phi(t)\in B, \tau_n\le t< \tau_{n+1}\big)\\
&=\sum_{n=0}^\infty \mathbb{P}_(\phi_{t-\tau_n}(\Psi_n)\in B, \tau_n\le t< \tau_{n+1}\big)
\end{align*}
for $x\in \SP$, $B\in \B$, it is enough to show that for each $x_0\in \SP$ there exist a constant $c_{x_0}>0$,  an open set $U_{x_0}$ containing $x_0$ and an open set $V_{x_0}$ such that
\begin{equation}\label{d:lower}
\int_0^\infty P^t(x,B)e^{-t}dt\ge  c_{x_0}\mathbf{1}_{U_{x_0}}(x)m(B\cap V_{x_0}), \quad B\in \B, x\in \SP,
\end{equation}
where $m$ is the product of the $(d+1)$-dimensional Lebesgue measure and the counting measure on $\{0,1\}$. The kernel $T_{x_0}(x,B)=c_{x_0}\mathbf{1}_{U_{x_0}}(x)m(B\cap V_{x_0})$ is a continuous component non-trivial at $x_0$ for $K_a$ with $a$ being the exponential distribution on $\mathbb{R}_+$. By taking a sequence of points $(x_k)$ such that  $\SP=\bigcup_{k}U_{x_k}$ we can define the kernel $T=\sum_{k=1}^\infty 2^{-k}T_{x_k}$ and conclude that $T$ is a continuous component non-trivial at every $x\in \SP$. It implies that $\Phi$ is a $T$-process.

We have for any $n$
\begin{equation}\label{d:nabs}
 \int_0^\infty P^t(x,B) e^{-t}dt\ge  \int_{0}^{\infty}\mathbb{P}_x\big(\phi_{t-\tau_n}(\Psi_n)\in B, \tau_n\le t< \tau_{n+1}\big)e^{-t}dt.
\end{equation}
We will show that
we can pick an $n$ such that the measure in the right-hand side of \eqref{d:nabs} has a lower bound as in \eqref{d:lower}. To this end we apply
\cite[Lemma 6.3]{benaim12} to the $(d+1)$-dimensional component of $\SP$.

Assume first that $d=2$ and take $n=2$ in \eqref{d:nabs}. It follows from \eqref{d:Phi} and \eqref{d:Psin} that
\[
\phi_{t-\tau_2}(\Psi_2)=\phi_{t-(\sigma_2+\sigma_1)}(\Psi_2),\quad \Psi_2=\mathbf{S}\big(\phi_{\sigma_2}(\Psi_1),\vartheta_2\big),\quad \Psi_1=\mathbf{S}\big(\phi_{\sigma_1}(x),\vartheta_1\big),
\]
where $\vartheta_k$ are random variables with distribution $\nu(\phi_{\sigma_k}(\Psi_{k-1}),\cdot)$, $k=1,2$, while $\mathbf{S}$ and $\nu$ are as in \eqref{d:S} and \eqref{d:nu}. Let $\sigma$ be an exponentially distributed  random variable independent of all other random variables. Then the right-hand side of  \eqref{d:nabs} is equal to \begin{equation}\label{d:banaim}
\mathbb{P}_x\big(\phi_{\sigma-(\sigma_1+\sigma_2)}(\Psi_2)\in B, \sigma_1+\sigma_2\le \sigma< \sigma_1+\sigma_2+\sigma_3\big).
\end{equation}
Let $x_0=(\xi_0,\zeta_0,i_0)$ with $\xi_0\in (0,1)\times (0,1]$,  $\zeta_0\in [0,\zeta_m^{i_0})$ and $i_0\in \{0,1\}$.
We take two fire occurrences in a single season and the third jump to be the exit time form the given season.
We define $i=i_0$,   $\xi_1=S^{i}_{\theta_1}(\xi_0)$ and $\xi_2=S^{i}_{\theta_2}(\xi_1)$, where $\theta_1\in (0,1)^2$ and $\theta_2\in (0,1)^2$ are such that $p_{\theta_1}^i(\xi_0,\zeta_0)>0$ and $p_{\theta_2}^i(\xi_1,\zeta_0)>0$.
We can always choose such $\theta_1$ and $\theta_2$ by \eqref{e:ptheta}. Recall that the functions $p^i$ are continuous and the jump rate function  $q$, given by $q(x)=\lambda^i(\xi,\zeta)$, is also continuous. This, together with \eqref{d:tail} and \eqref{d:nu}, implies that there is a neighbourhood of $x_0$ such that the distribution of the random variable $(\sigma_1,\sigma_2,\sigma)$ has an absolutely continuous part with respect to the 3-dimensional Lebesgue measure  and with density being bounded below by a positive constant in  a neighbourhood of $(0,0,0)$.
Let us introduce on $\Delta_t=\{(t_1,t_2): t_1,t_2>0, t_1+t_2<t\}$ the following mapping
 \[
 \psi_{(t,\xi, {\boldsymbol\theta})}^i(\mathbf{t})=\varphi^i_{t-(t_1+t_2)}\circ S_{\theta_2}^i\circ \varphi^i_{t_2}\circ  S_{\theta_1}^i\circ \varphi^i_{t_{1}}(\xi)\quad \text{for } \mathbf{t}=(t_1,t_2)\in \Delta_t,  \]
where $t>0$, $\boldsymbol{\theta}=(\theta_1,\theta_2)\in (0,1)^2\times(0,1)^2$, $\xi=(w,g)\in (0,1)\times(0,1]$.
To estimate \eqref{d:banaim} from below it is enough by \cite[Lemma 6.3]{benaim12} to show that the mapping
\[
(\boldsymbol{t},t)\mapsto \big(\psi_{(t,\xi, {\boldsymbol\theta})}^i(\mathbf{t}),\zeta+t\big)
\]
has the derivative of full rank $3$ for small $t$ in a neighbourhood of  $(\xi_0,\zeta_0)$.

Observe that

\begin{equation}
\lim_{\xi\to \xi_0,\, t\to 0}\frac{d\psi_{(t,\xi, {\boldsymbol\theta})}^i(\mathbf{t}) }{d\mathbf{t}}=A,
\end{equation}

where $A$ is the matrix with columns $v_1,v_2$
given by
\[
v_1=DS_{\theta_2}^i(\xi_1)DS_{\theta_1}^i(\xi_0)b^i(\xi_0)-b^i(\xi_2),\quad v_2=DS_{\theta_2}^i(\xi_1)b^i(\xi_1)-b^i(\xi_2),
\]
$D$ denotes the derivative with respect to $\xi$ and $b^i$ is as in \eqref{twoeq}. Now we show that the vectors $v_1$ and $v_2$ are linearly independent.  The transformation $S_\theta^i$  is linear, thus $DS_{\theta}^i=S_{\theta}^i$. Let $S_1=S_{\theta_1}^i$,  $S_2=S_{\theta_2}^i$, and,  to simplify calculations, let $S_j(w,g)=(\alpha_j w,\beta_j g)$, where  $(1-\alpha_j,1-\beta_j)=\theta_j$ by \eqref{d:Stheta}. Then  we have
\[
A=\begin{pmatrix}
     \alpha_2\alpha_1(\alpha_2\alpha_1-1)r_w^i w^2 & \alpha_2(\alpha_2-1)\alpha_1^2r_w^i w^2 \\
     \beta_2\beta_1 r_g^ig\big[(\alpha_2\alpha_1-1)  w+(\beta_2\beta_1-1) g\big] &\beta_2\beta_1r_g^ig\big[(\alpha_2-1)\alpha_1w+(\beta_2-1)\beta_1g\big]
\end{pmatrix}.
\]
We see that
$
\det A=0
$ if and only if
\begin{equation}\label{e:detA2}
\frac{\alpha_1}{\beta_1}\frac{1-\alpha_2}{1-\beta_2}=\frac{1-\alpha_1\alpha_2}{1-\beta_1\beta_2}.
\end{equation}
We conclude that
\[
\det \left[\frac{d\psi_{(t,\xi, {\boldsymbol\theta})}^i(\mathbf{t}) }{d\mathbf{t}}\right]\neq 0
\]
for $\xi$ close to $\xi_0$, sufficiently small $t$ and suitably chosen $\boldsymbol{\theta}$.

Now for the case of $d=4$ we take $n=5$ (two fire occurrences in each season and a switch between the seasons) in \eqref{d:nabs}. Let $\Delta_t=\{(t_1,t_2,t_3,t_4): t_1,t_2,t_3,t_4>0,\linebreak t_1+t_2+t_3+t_4<t\}$ and
\begin{multline*}
 \psi_{(t,\xi,\zeta, {\boldsymbol\theta})}^{i}(\mathbf{t})=\varphi^{1-i}_{t-(t_3+t_4+\zeta_m^i-\zeta)}\circ S_{\theta_4}^{1-i}\circ \varphi^{1-i}_{t_4}\circ  S_{\theta_3}^{1-i}\circ \varphi^{1-i}_{t_{3}}\\ \circ  \varphi^i_{\zeta_m^i-\zeta-(t_1+t_2)}\circ S_{\theta_2}^i\circ \varphi^i_{t_2}\circ  S_{\theta_1}^i\circ \varphi^i_{t_{1}}(\xi)
\end{multline*}
for $\mathbf{t}=(t_1,t_2,t_3,t_4)\in \Delta_t$, $t>0$, $\boldsymbol{\theta}=(\theta_1,\theta_2,\theta_3,\theta_4)$ with each $\theta_j\in (0,1)^2$, and $\xi=(w,g,h_G,h_B)\in (0,1)\times(0,1]\times(0,\infty)^2$.
We take  arbitrary $x_0=(\xi_0,\zeta_0,i_0)$ with $\xi_0\in (0,1)\times (0,1]\times (0,\infty)^2$,  $\zeta_0\in [0,\zeta_m^{i_0})$ and $i_0\in \{0,1\}$. We define $i=i_0$,
\[
\xi_1=\varphi^i_{\zeta_m^i-\zeta_0}(\xi_0), \quad \xi_2=S_{\theta_1}^i(\xi_1),\quad \xi_3=S_{\theta_2}^i(\xi_2),\quad \xi_4=S_{\theta_3}^{1-i}(\xi_3),\quad \xi_5=S_{\theta_4}^{1-i}(\xi_4),
\]
where $\theta_1,\theta_2,\theta_3, \theta_4\in (0,1)^2$ are such that $p_{\theta_j}^i(\xi_j,\zeta_m^i-\zeta_0)>0$ for $j=1,2$ and $p_{\theta_j}^{1-i}(\xi_j,0)>0$ for $j=3,4$.
Similarly as for $d=2$ by \cite[Lemma 6.3]{benaim12} it is enough to show that the mapping
\[
(\boldsymbol{t},t)\mapsto \left(\psi_{(t,\xi,\zeta, {\boldsymbol\theta})}^i(\mathbf{t}),t-(\zeta_m^i-\zeta)\right)
\]
has the derivative of full rank $5$ for a short time of staying in the season $1-i$, i.e. as $t\downarrow \zeta_m^i-\zeta_0$, and in a neighbourhood of  $(\xi_0,\zeta_0)$.
It is easily seen that
\begin{equation}
\lim_{\substack{\xi\to \xi_0,\, \zeta\to\zeta_0,\\ t, t_1\to \zeta_m^i-\zeta_0,\, t_2\to 0 }}\frac{d\psi_{(t,\xi,\zeta, {\boldsymbol\theta})}^i(\mathbf{t}) }{d\mathbf{t}}=A,
\end{equation}
where now $A$ is the matrix with columns $v_1,v_2, v_3,v_4$
given by
\begin{align*}
v_1&=DS_{\theta_4}^{1-i}(\xi_4)DS_{\theta_3}^{1-i}(\xi_3)(DS_{\theta_2}^i(\xi_2)DS_{\theta_1}^i(\xi_1)b^i(\xi_1)-b^i(\xi_3)),\\ v_2&=DS_{\theta_4}^{1-i}(\xi_4)DS_{\theta_3}^{1-i}(\xi_3)(DS_{\theta_2}^i(\xi_2)b^i(\xi_2)-b^i(\xi_3)),\\
v_3&=DS_{\theta_4}^{1-i}(\xi_4)DS_{\theta_3}^{1-i}(\xi_3)b^{1-i}(\xi_3)-b^{1-i}(\xi_5),\\
v_4&=DS_{\theta_4}^{1-i}(\xi_4)b^{1-i}(\xi_4)-b^{1-i}(\xi_5).
\end{align*}
By using the formula for $b(\xi)$ given by the right-hand side of equation  \eqref{no-fire.herb} with $\xi=(w,g,h_G,h_B)$ and by taking $S(\xi)=(\alpha w,\beta g, h_G,h_B)$ for the corresponding  $S_\theta^i$ as in \eqref{d:Stheta}, we obtain
\begin{equation}\label{d:Sb}
S\big(b(\xi)\big)-b\big(S(\xi)\big)=\begin{pmatrix}
\alpha(\alpha-1) r_w w^2\\
\beta r_gg\big[(\alpha -1) w  +(\beta-1)g\big]\\
(1-\beta)e_gh_G g\\
(1-\alpha)e_w h_B w
\end{pmatrix} \quad\text{for }\xi=(w,g,h_G,h_B).
\end{equation}
Let us take $S_j=S_{\theta_j}^i$ for $j=1,2$ and $S_j=S_{\theta_j}^{1-i}$ for $j=3,4$ so that $S_j(w,g,h_G,h_B)=(\alpha_j w,\beta_j g, h_G,h_B)$ with $(1-\alpha_j,1-\beta_j)=\theta_j$.
Applying \eqref{d:Sb} with
$r_w=r_w^{i}$ and $r_g=r_g^{i}$ and appropriate $\alpha,\beta$, the vector $v_1$ with  $\xi_1=(w,g,h_G,h_B)$ is of the form
\[
v_1=\begin{pmatrix}
\alpha_4\alpha_3\alpha_2\alpha_1(\alpha_2\alpha_1-1)r_w^i w^2 \\
\beta_4\beta_3\beta_2\beta_1r_g^ig\big[(\alpha_2\alpha_1 -1) w  +(\beta_2\beta_1-1)g\big] \\
(1-\beta_2\beta_1)e_gh_Gg\\
(1-\alpha_2\alpha_1)e_w h_B w
\end{pmatrix}.
\]
Similarly, we obtain
\[
v_2=\begin{pmatrix}
\alpha_4\alpha_3\alpha_2\alpha_1^2(\alpha_2-1)r_w^i w^2\\
\beta_4\beta_3\beta_2\beta_1r_g^ig\big[(\alpha_2 -1)\alpha_1 w  +(\beta_2-1)\beta_1g\big]\\
(1-\beta_2)\beta_1e_gh_Gg\\
(1-\alpha_2)\alpha_1 e_w h_B w
\end{pmatrix}.
\]
Next observe that
\[
v_3=\begin{pmatrix}
\alpha_4\alpha_3 (\alpha_4\alpha_3-1)\alpha_2^2\alpha_1^2 r_w^{1-i}w^2 \\
\beta_4\beta_3\beta_2\beta_1 r_g^{1-i}g\big[(\alpha_4\alpha_3-1)\alpha_2\alpha_1w +(\beta_4\beta_3-1)\beta_2\beta_1 g\big] \\
(1-\beta_4\beta_3)\beta_2\beta_1e_gh_Gg\\
(1-\alpha_4\alpha_3)\alpha_2\alpha_1 e_w h_B w
\end{pmatrix}
\]
and
\[
v_4=\begin{pmatrix}
\alpha_4 (\alpha_4-1)\alpha_3^2\alpha_2^2\alpha_1^2 r_w^{1-i}w^2\\
 \beta_4\beta_3\beta_2\beta_1 r_g^{1-i}g\big[(\alpha_4-1)\alpha_3\alpha_2\alpha_1w +(\beta_4-1)\beta_3\beta_2\beta_1 g\big] \\
(1-\beta_4)\beta_3\beta_2\beta_1e_gh_Gg\\
(1-\alpha_4)\alpha_3\alpha_2\alpha_1 e_w h_B w
\end{pmatrix}.
\]
Using Gaussian elimination it is easily seen that the first two coordinates of $v_1$ and $v_2$ can be made zero and
hence $\det A=0$ if and only if \eqref{e:detA2} holds or
\begin{equation}\label{e:detA4}
\frac{\alpha_3}{\beta_3}\frac{1-\alpha_4}{1-\beta_4}=\frac{1-\alpha_3\alpha_4}{1-\beta_3\beta_4}.
\end{equation}
Consequently, we can find $\theta_j=(1-\alpha_j,1-\beta_j)$, $j=1,2,3,4,$ such that both \eqref{e:detA2} and \eqref{e:detA4} do not hold implying that
\[
\det\frac{d\psi_{(t,\xi,\zeta, {\boldsymbol\theta})}^i(\mathbf{t}) }{d\mathbf{t}}\neq 0
\]
for $t$ close to $\zeta_m^{i_0}-\zeta_0$ and $(\xi,\zeta)$ in a neighbourhood of $(\xi_0,\zeta_0)$.
\end{proof}

\section{Proof of Theorem~\ref{t:mean}}\label{s:proof}

The resolvent kernel $R\colon \SP\times \mathcal{B}\to [0,1]$ is defined as
\[
R(x,B)=\int_{0}^\infty e^{-t}P^t(x,B)dt.
\]
The kernel $R$ is the transition probability for the discrete-time Markov chain $\breve{\Phi}$ that is defined by observing the process $\Phi$ at jump-times of a Poisson process with intensity~$1$ that is independent of the process $\Phi$. We call this chain the \emph{$R$-chain}.
We say that the $R$-chain is a \emph{$T$-chain} if there is a probability distribution $b=(b_k)$ on $\mathbb{Z}_+$ and a non-trivial continuous component for the kernel
\[
R_b(x,B)=\sum_{n=0}^\infty b_n R^n(x,B).
\]

Following \cite{meyn_tweedieI} and \cite{meyn_tweedieII} we say that a trajectory converges to infinity
if it visits each compact set only finitely many times and
we write $\{\breve{\Phi}\to\infty\}$ for  the $R$-chain and  $\{\Phi\to \infty\}$ for the process $\Phi$.

\begin{lemma}\label{p:tch}
If the $R$-chain $\breve{\Phi}$ is a $T$-chain then $\Phi$ is a $T$-process and
\begin{equation}\label{e:evan}
\mathbb{P}_x\{\breve{\Phi}\to\infty\}=\mathbb{P}_x\{\Phi\to\infty\},\quad x\in \SP.
\end{equation}
If $\mathbb{P}_x\{\Phi\to\infty\}<1$ for all $x\in \SP$ and $\Phi$ is a $T$-process then the $R$-chain is a $T$-chain.
\end{lemma}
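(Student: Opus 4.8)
The plan is to reduce every statement to the single observation that the powers of the resolvent are exactly the kernels obtained by sampling $\Phi$ at Gamma-distributed times. Using $P^sP^t=P^{s+t}$ and the fact that a sum of $n$ independent $\mathrm{Exp}(1)$ variables is $\mathrm{Gamma}(n,1)$, an induction gives
\[
R^n(x,B)=\int_0^\infty \frac{t^{\,n-1}}{(n-1)!}\,e^{-t}\,P^t(x,B)\,dt,\qquad n\ge 1,
\]
with $R^0$ the identity kernel. Hence, writing $\gamma_n$ for the $\mathrm{Gamma}(n,1)$ law, $R_b=\sum_n b_nR^n=K_a$ is itself a sampled kernel with $a=b_0\delta_0+\sum_{n\ge1}b_n\gamma_n$ a probability measure on $\mathbb{R}_+$. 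The first claim is then immediate: a non-trivial continuous component $T$ of $R_b$ is, verbatim, a non-trivial continuous component of $K_a$, so $\Phi$ is a $T$-process.

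For the evanescence identity \eqref{e:evan} I would use that $\breve\Phi$ observes $\Phi$ along the points $T_1<T_2<\cdots$ of a rate-one Poisson process independent of $\Phi$, whose intensity measure is Lebesgue measure. Fixing a countable exhaustion of $\SP$ by compacts $C_k$ and conditioning on the entire path of $\Phi$, the count $\sum_n\mathbf 1_{C_k}(\Phi(T_n))$ is Poisson distributed with mean $\int_0^\infty\mathbf 1_{C_k}(\Phi(t))\,dt$. Therefore, almost surely, $\breve\Phi$ enters $C_k$ infinitely often iff the occupation time of $C_k$ is infinite, and so $\{\breve\Phi\to\infty\}$ agrees a.s.\ with the event that every compact is occupied for only finite total time. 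Since a bounded time-set has finite measure, $\{\Phi\to\infty\}\subseteq\{\breve\Phi\to\infty\}$ is automatic. The reverse inclusion is the main obstacle: one must upgrade finiteness of the occupation time of each compact to the path eventually leaving each compact, i.e.\ exclude a path making infinitely many vanishingly short returns to a compact set. I would rule this out using the right-continuity of the paths together with the recurrence/transience dichotomy for $T$-chains from \cite{meyn_tweedieI,meyn_tweedieII}, which on the reachable part of the space forbids such behaviour and yields equality in \eqref{e:evan}.

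For the converse direction the same Gamma identity in multiplicative form, $R^mK_a=K_{\gamma_m\ast a}$, lets me transport a continuous component: if $K_a\ge T$ with $T$ non-trivial and lower semicontinuous, then $R_bK_a=K_{(\sum_n b_n\gamma_n)\ast a}\ge R_bT$, and $R_bT(x,\SP)=\int T(y,\SP)\,R_b(x,dy)>0$ for every $x$ because $T$ is non-trivial and $R_b(x,\cdot)$ is a probability measure. The genuine difficulty is that this produces a component for the sampling law $(\sum_n b_n\gamma_n)\ast a$ rather than for a pure resolvent mixture $R_b=K_{\sum_n b_n\gamma_n}$; converting a continuous component for an arbitrary sampling measure $a$ into one for a Gamma mixture is the crux of the matter. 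This is exactly where the hypothesis $\mathbb{P}_x\{\Phi\to\infty\}<1$ for all $x$ enters: together with the already-established identity \eqref{e:evan} it guarantees that $\Phi$ does not escape to infinity from any starting point, so the support of $T$ is reached from every $x$ and the lower semicontinuous component can be smoothed by the resolvent and spread over a countable reachable family of points---exactly as the kernels $T_{x_k}$ are assembled in the proof of Theorem~\ref{th:mainVT}---to obtain a genuinely non-trivial lower semicontinuous component of some $R_b$. Following \cite{meyn_tweedieI,meyn_tweedieII}, this globalisation, rather than the algebra of the Gamma kernels, is where the argument must be carried out with care.
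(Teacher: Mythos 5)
Your opening step is exactly the paper's: the Erlang identity $R^n(x,B)=\int_0^\infty e^{-t}\tfrac{t^{n-1}}{(n-1)!}P^t(x,B)\,dt$ identifies $R_b$ with $K_a$ for the mixture $a=b_0\delta_0+\sum_{n\ge 1}b_n\gamma_n$ (with $\gamma_n$ the $\mathrm{Gamma}(n,1)$ law), and the first claim follows at once; this part is correct. The remaining two claims, however, are handled in the paper by precise citations --- the equality \eqref{e:evan} is \cite[Proposition 3.2]{meyn_tweedieII} and the converse statement is \cite[Theorem 4.1 (iii)]{meyn_tweedieII} --- and your proposal does not supply adequate substitutes. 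Your Poisson-conditioning observation is correct and gives a clean reduction of \eqref{e:evan}: almost surely $\{\breve{\Phi}\to\infty\}$ coincides with the event that every compact set has finite occupation time, so only the exclusion of paths with finite occupation time but infinitely many vanishingly short returns is at stake. But that exclusion is precisely the substance of the cited proposition; invoking ``the recurrence/transience dichotomy for $T$-chains'' names no statement and gives no mechanism by which a dichotomy forbids such paths. The key step is asserted, not proved.

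The converse direction contains concrete errors in addition to the gap you yourself concede. With the paper's convention $KT(x,B)=\int T(y,B)K(x,dy)$, your transported kernel $R_bT$ has no reason to be lower semicontinuous: lower semicontinuity of $T(\cdot,B)$ does not survive integration in the $x$-variable against $R_b(x,dy)$ unless $R_b$ has a Feller-type property, which is not available for these PDMPs. The composition that does preserve the component property is the opposite one, $TR_b(x,B)=\int R_b(y,B)T(x,dy)$, whose lower semicontinuity follows by approximating $y\mapsto R_b(y,B)$ by nonnegative simple functions and using monotone convergence; since convolution of sampling laws commutes, this yields a component of the same kernel $K_{a\ast\gamma_b}$. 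Even after this repair, as you observe, a component of $K_{a\ast\gamma_b}$ is not a component of any resolvent mixture $R_b$, and the ``globalisation'' paragraph that is supposed to close this gap describes the difficulty rather than resolving it: the analogy with assembling the kernels $T_{x_k}$ in Theorem~\ref{th:mainVT} does not transfer, because there each $T_{x_k}$ was already a component of $K_a$ with $a$ exponential, which is exactly what is missing here. Moreover, your appeal to \eqref{e:evan} at this point is circular: in your own architecture \eqref{e:evan} was derived under the hypothesis that the $R$-chain is a $T$-chain, which is the very conclusion the converse is supposed to establish. The sound course is the paper's: quote \cite[Proposition 3.2]{meyn_tweedieII} and \cite[Theorem 4.1 (iii)]{meyn_tweedieII}, whose proofs carry out the reachability argument you gesture at.
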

\begin{proof}
Since the $n$th jump of the Poisson process has the Erlang distribution,  we have
\[
R^n(x,B)=\int_0^\infty e^{-t}\frac{t^{n-1}}{(n-1)!}P^t(x,B)dt.
\]
If we consider the probability measure
\[
a(dt)=\sum_{n=0}^\infty b_n e^{-t}\frac{t^n}{n!}dt
\]
on $\mathbb{R}_+$, where $b=(b_n)$ is a probability measure on $\mathbb{Z}_+$, then the kernel $K_a$ has the same continuous component as $R_b$. The equality in \eqref{e:evan} follows from \cite[Proposition 3.2]{meyn_tweedieII}. The converse statement is  \cite[Theorem 4.1 (iii)]{meyn_tweedieII}.
\end{proof}

The $R$-chain is called a \emph{mean ergodic} chain on $\SP$ if for each probability measure $\mu\in \mathcal{M}(\SP)$ there exists a measure $\mu\Pi\in \mathcal{M}(\SP)$ such that
\begin{equation}\label{d:meg}
\lim_{n\to\infty}\left\|\frac{1}{n}\sum_{k=0}^{n-1} \mu R^k -\mu\Pi\right\|=0.
\end{equation}
Observe that the measure $\pi=\mu\Pi$ in condition \eqref{d:meg} is invariant for the $R$-chain, i.e. $\pi R=\pi$.  It is known (see \cite{azema67}) that a measure $\pi$ is invariant for the process $\Phi$ if and only if it is invariant for the $R$-chain. We now show that the convergence in \eqref{d:meg} is equivalent to the one in \eqref{c:meg}.

\begin{lemma}\label{t:meane}
The process $\Phi$ is mean ergodic if and only if the $R$-chain is mean ergodic on $\SP$. Moreover, for any bounded Borel measurable $f$ we have
\begin{equation*}
\lim_{t\to\infty}\frac{1}{t}\int_0^t f\big(\Phi(s)\big)ds=\lim_{n\to\infty}\frac{1}{n}\sum_{k=1}^n f(\breve{\Phi}_k),
\end{equation*}
if any of the pointwise limits exist.
\end{lemma}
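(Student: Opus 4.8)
The plan is to realize both objects on a single probability space exactly as in the definition of the $R$-chain: let $N$ be a rate-one Poisson process with arrival times $T_1<T_2<\cdots$, independent of $\Phi$, and set $\breve\Phi_k=\Phi(T_k)$. Writing $A_t=\frac1t\int_0^tP^s\,ds$ and $B_n=\frac1n\sum_{k=0}^{n-1}R^k$ for the two Cesàro kernels, I would treat the two assertions separately: first the (unconditional) equivalence of mean ergodicity at the level of measures, and then the conditional coincidence of the two pathwise limits.

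For the measure statement the crux is an exact renewal identity. Since the $k$-th arrival time is $\Gamma(k,1)$-distributed, summing the formula $R^k=\int_0^\infty e^{-t}\frac{t^{k-1}}{(k-1)!}P^t\,dt$ from the proof of Lemma~\ref{p:tch} gives
\[
\sum_{k=1}^nR^k=\int_0^\infty \mathbb{P}(N(t)\le n-1)\,P^t\,dt=\int_0^\infty \mathbb{P}(T_n>t)\,P^t\,dt .
\]
The normalized weight $g_n(t)=\frac1n\mathbb{P}(T_n>t)$ is a probability density on $[0,\infty)$ (its integral equals $\frac1n\mathbb{E}[T_n]=1$), and because $T_n$ concentrates at $n$ on the scale $\sqrt n$, $g_n$ is $L^1$-close to the uniform density $\frac1n\mathbf{1}_{[0,n]}$, with $\int_0^\infty|g_n(t)-\frac1n\mathbf{1}_{[0,n]}(t)|\,dt=O(n^{-1/2})$. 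Using $\|\mu P^t\|\le 1$, this yields $\|\mu(\frac1n\sum_{k=1}^nR^k)-\mu A_n\|\to0$, and since $\|\mu B_n-\mu(\frac1n\sum_{k=1}^nR^k)\|\le 2/n$ we obtain $\|\mu B_n-\mu A_n\|\to0$. A routine estimate gives $\|\mu A_t-\mu A_{\lfloor t\rfloor}\|\to0$, so $\mu B_n\to\mu\Pi$ in $\|\cdot\|$ if and only if $\mu A_t\to\mu\Pi$ in $\|\cdot\|$, with the same limit; this is exactly the equivalence of mean ergodicity for $\Phi$ and for the $R$-chain. I stress that this route avoids any Tauberian argument, the comparison $\|\mu B_n-\mu A_n\|\to0$ being two-sided and unconditional.

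For the pathwise statement I would condition on the $\sigma$-field $\mathcal{F}^\Phi$ generated by the trajectory of $\Phi$. Given $\mathcal{F}^\Phi$ the integrand $s\mapsto f(\Phi(s))$ is deterministic and the sampling is an independent rate-one Poisson process, so by Campbell's formula $\sum_{k=1}^{N(t)}f(\breve\Phi_k)=\int_0^tf(\Phi(s))\,N(ds)$ has conditional mean $\int_0^tf(\Phi(s))\,ds$ and conditional variance $\int_0^tf(\Phi(s))^2\,ds\le\|f\|_\infty^2\,t$. Hence $\mathrm{Var}\big(\frac1t\sum_{k=1}^{N(t)}f(\breve\Phi_k)\mid\mathcal{F}^\Phi\big)\le\|f\|_\infty^2/t$, and Chebyshev's inequality together with the Borel--Cantelli lemma along $t_m=m^2$ gives $\frac1t\sum_{k=1}^{N(t)}f(\breve\Phi_k)-\frac1t\int_0^tf(\Phi(s))\,ds\to0$ almost surely, first along $(t_m)$ and then for all $t$ by interpolation (using boundedness of $f$, the unit jumps of $N$, and the bound $N(t_{m+1})-N(t_m)=O(m)$). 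Combining this with the strong law $N(t)/t\to1$ and the identity $\frac1{N(t)}\sum_{k=1}^{N(t)}f(\breve\Phi_k)=\frac{t}{N(t)}\cdot\frac1t\sum_{k=1}^{N(t)}f(\breve\Phi_k)$, and noting that $N(t)$ runs through every integer, shows that $\frac1n\sum_{k=1}^nf(\breve\Phi_k)$ and $\frac1t\int_0^tf(\Phi(s))\,ds$ share the same limit whenever either one exists.

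The main obstacle is the almost-sure upgrade in the last paragraph: the conditional-variance bound only delivers convergence in probability (or along a sparse subsequence), and turning this into a genuine pathwise statement valid for \emph{all} $t$ on the common probability space requires the Borel--Cantelli step together with the interpolation control of the fluctuations of both averages between consecutive $t_m$. By contrast, once the renewal identity $\sum_{k=1}^nR^k=\int_0^\infty\mathbb{P}(T_n>t)P^t\,dt$ is in hand, the measure-level equivalence is comparatively soft, its only quantitative input being the $O(n^{-1/2})$ total-variation estimate for $g_n$.
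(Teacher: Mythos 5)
Your proposal is correct, and it reaches the lemma by a genuinely different route than the paper. The paper works at the level of Abel means: it writes the resolvent as $\mu U_\alpha=\sum_{k\ge 1}(1-\alpha)^{k-1}\mu R^k$, uses Krengel's result that Ces\`aro convergence of $\mu R^k$ implies Abel convergence $\alpha\mu U_\alpha\to\mu\Pi$, and then invokes Emilion's Tauberian theorems to pass back from Abel convergence to the discrete Ces\`aro limit \eqref{d:meg} and to the continuous one \eqref{c:meg}; the pathwise identity of the two time averages is not proved but quoted from Br\'emaud (Theorem 5.1.1 there). You instead prove both halves from scratch: the exact renewal identity $\sum_{k=1}^n R^k=\int_0^\infty\mathbb{P}(T_n>t)P^t\,dt$ together with the bound $\mathbb{E}|T_n-n|\le\sqrt{n}$ gives a two-sided, unconditional $L^1$ comparison $\|\mu B_n-\mu A_n\|=O(n^{-1/2})$ between the discrete and continuous Ces\`aro kernels, which yields the equivalence (with the same limit measure) without any Tauberian input; and your conditional Campbell/Chebyshev/Borel--Cantelli argument on the coupled space, with the interpolation between $t_m=m^2$ and $t_{m+1}$, is in effect a self-contained proof of the Br\'emaud citation. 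All the quantitative steps check out: $g_n$ integrates to $1$, the $O(n^{-1/2})$ estimate follows from $\operatorname{Var}(T_n)=n$, the $2/n$ shift bound and the $\|\mu A_t-\mu A_{\lfloor t\rfloor}\|\le 2/t$ estimate are right, and the interpolation only needs $N(t_{m+1})-N(t_m)=o(m^2)$ (so even the strong law for $N$ suffices, making your $O(m)$ claim stronger than necessary). What your approach buys is self-containedness and explicit rates; what the paper's buys is brevity, since the hard analytic content is outsourced to standard operator-ergodic references rather than re-derived.
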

\begin{proof}
For any probability measure $\mu$ on $\B$ we define the resolvent operator of $P^t$ by
\[
\mu U_\alpha (B)=\int_0^\infty e^{-\alpha t}\mu P^t(B)dt, \quad \alpha>0,B\in \B.
\]
We have $\mu U_1=\mu R$ and
\begin{equation}
\mu U_\alpha (B)=\sum_{k=1}^\infty (1-\alpha)^{k-1}\mu R^k(B),\quad B\in \B.
\end{equation}
First observe that the Ces\'aro convergence in \eqref{d:meg} implies the Abel convergence \[
\lim_{\alpha\to 0^{+}}\alpha \sum_{k=1}^\infty (1-\alpha)^{k-1}\mu R^k=\mu \Pi,
\]
see e.g. \cite[Theorem 2.1]{krengel85}, and leads to
\begin{equation}\label{c:aeg}
\lim_{\alpha\to 0^{+}}\|\alpha \mu U_\alpha-\mu \Pi\|=0.
\end{equation}
Condition \eqref{c:aeg} implies \eqref{d:meg} by \cite[Theorem 3.1]{emilion} and
\eqref{c:meg} by \cite[Theorem 3.3]{emilion}. Finally, the implication leading from \eqref{c:meg} to \eqref{c:aeg} follows by using standard arguments. The second part follows from \cite[Theorem 5.1.1]{bremaud92}.
\end{proof}

We need to introduce more notation.
The following notions will be presented  only for the  continuous time process $\Phi$, but  analogous definitions are valid for the discrete time $R$-chain  $\check{\Phi}=\{\check{\Phi}_k\}$. We refer to \cite{meyn09} for the general theory of discrete time Markov chains.

Given a measurable set $B$ we define  the first hitting time of the set $B$ and the number of visits to $B$ respectively  by
\[
\tau_B=\inf\{t> 0: \Phi(t)\in B\}\quad \text{and}\quad \eta_B=\int_{0}^\infty \mathbf{1}\{\Phi(t)\in B\}dt.
\]
A set $B$ is called
(stochastically) \emph{closed} for the process if  $B\neq\emptyset$ and  $\mathbb{P}_x\{\Phi(t)\in B \text{ for all }t\ge 0\}=1$ for $x\in B$. A closed set $B$ is said to be \emph{maximal} if $x\in B \Longleftrightarrow \mathbb{P}_x\{\eta_B=\infty\}=1$.
A set $H$ is
called a \emph{Harris set} for the process $\Phi$ if it is closed and if
there exists some $\sigma$-finite measure $\psi$ such that  $\mathbb{P}_x\{\eta_B=\infty\}=1$ for all $x\in H$ and all $B\in \B$ with $\psi(B)>0$. A set $H$ is
called a \emph{maximal Harris set} if it is a Harris set and a maximal closed set. The process restricted to a maximal Harris set $H$ has an essentially unique invariant measure  on $H$. If the measure is finite then it can be normalized and the process has a unique invariant probability measure on $H$. In that case the set $H$ is called a \emph{positive Harris set}.

\begin{lemma}\label{t:dec}
Suppose that condition (V) holds. Then $\mathbb{P}_x\{\Phi\to\infty\}=0$ for all $x\in \SP$.
If the process $\Phi$ is a $T$-process then the space $\SP$ has the decomposition into disjoint sets
\[
\SP=\bigcup_{i=1}^N H_i\cup E=H\cup E,
\]
where each  $H_i$ is a positive Harris set   and $\mathbb{P}_x\{\eta_H=\infty\}=1$ for all $x\in \SP$. Moreover, the $R$-chain is mean ergodic on $\SP$.
\end{lemma}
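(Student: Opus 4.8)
The plan is to transfer the entire problem to the discrete-time $R$-chain and then invoke the structure theory of $T$-chains, using condition (V) twice: once for non-evanescence and once for positivity and finiteness. First I would prove the evanescence statement directly from (V). Since $f\ge 1$, on the complement of the compact set $C$ the inequality \eqref{e:F-L} reads $\EL V(x)\le -cf(x)\le -c<0$. Writing $\tau_C$ for the first hitting time of $C$ and using that $t\mapsto V(\Phi(t))-V(x)-\int_0^t\EL V(\Phi(s))ds$ is a local $\mathbb{P}_x$-martingale, an optional-stopping/Dynkin argument applied to $\Phi$ stopped at $\tau_C$ gives $c\,\mathbb{E}_x[\tau_C]\le \mathbb{E}_x\big[\int_0^{\tau_C}f(\Phi(s))\,ds\big]\le V(x)<\infty$ for $x\notin C$. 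Hence $\tau_C<\infty$ $\mathbb{P}_x$-a.s.\ for every $x$, and by the strong Markov property the process returns to the fixed compact set $C$ infinitely often with probability one. A trajectory that visits $C$ infinitely often cannot visit every compact set only finitely often, so $\mathbb{P}_x\{\Phi\to\infty\}=0$ for all $x\in \SP$. (Non-explosivity, which makes these hitting times meaningful, was already recorded after condition (V).)

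Second, I would pass to the $R$-chain. Because $\mathbb{P}_x\{\Phi\to\infty\}=0<1$ for all $x$ and $\Phi$ is a $T$-process by hypothesis, Lemma~\ref{p:tch} shows that $\breve\Phi$ is a $T$-chain, and \eqref{e:evan} gives $\mathbb{P}_x\{\breve\Phi\to\infty\}=0$; thus $\breve\Phi$ is a non-evanescent $T$-chain. The structure theory of $T$-chains \cite{meyn09,meyn_tweedieII} then yields a decomposition $\SP=\bigcup_i H_i\cup E$ into disjoint maximal Harris sets $H_i$ and a remainder $E$, where non-evanescence forces $\mathbb{P}_x\{\eta_H=\infty\}=1$ for all $x$ with $H=\bigcup_i H_i$.

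Third---and this is where (V) is used again---I would upgrade each $H_i$ to a positive Harris set and bound their number $N$. Positivity follows from the Foster criterion: the estimate $\mathbb{E}_x[\tau_C]\le V(x)/c$ obtained above transfers to the $R$-chain through the resolvent identity $RV=V+R\EL V$, which turns \eqref{e:F-L} into the discrete drift inequality $RV\le V-c\,Rf+dR\mathbf{1}_C$ with $Rf\ge 1$, and shows that the invariant measure carried by each $H_i$ is finite, hence normalizable to a probability measure $\pi_i$. For finiteness of $N$, integrating \eqref{e:F-L} against $\pi_i$ gives $c\,\pi_i(f)\le d\,\pi_i(C)$, so $\pi_i(C)\ge c/d>0$: every ergodic measure charges the fixed compact set $C$. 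Covering $C$ by the finitely many neighbourhoods $U_{x_k}$ on which the explicit continuous component $c_{x_k}\mathbf{1}_{U_{x_k}}(x)\,m(\cdot\cap V_{x_k})$ of Theorem~\ref{th:mainVT} minorizes $K_a$, each such neighbourhood can be charged by at most one $\pi_i$, since two mutually singular invariant measures cannot both dominate the same Lebesgue-absolutely-continuous piece $m(\cdot\cap V_{x_k})$; hence $N$ is at most the cardinality of the subcover.

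Finally, mean ergodicity of the $R$-chain follows from the decomposition: on each positive Harris set the Ces\`aro averages $\frac1n\sum_{k=0}^{n-1}\delta_y R^k$ converge in the norm $\|\cdot\|$ to $\pi_i$, while for a general starting point the a.s.\ absorption into a single $H_i$ (guaranteed by $\mathbb{P}_x\{\eta_H=\infty\}=1$ together with the closedness of the $H_i$) produces absorption probabilities $\rho_i(x)$ summing to one, so $\frac1n\sum_{k=0}^{n-1}\delta_x R^k\to \sum_i\rho_i(x)\pi_i$ and $\breve\Phi$ is mean ergodic on $\SP$. I expect the main obstacle to be the third step: converting the abstract, possibly countable, Harris decomposition of a non-evanescent $T$-chain into a finite family of \emph{positive} Harris sets, which is precisely where both halves of condition (V)---the drift giving finite expected return times and the resulting lower bound $\pi_i(C)\ge c/d$ combined with the continuous-component cover of the compact set $C$---must be brought together.
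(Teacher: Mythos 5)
Your overall architecture---non-evanescence from (V), transfer to the $R$-chain via Lemma~\ref{p:tch}, Doeblin decomposition for $T$-chains, positivity from the drift, patching via absorption---is essentially the paper's, which implements these steps by citation: condition (CD1) and Theorem~3.1 of Meyn--Tweedie~II for non-evanescence, the Doeblin decomposition (Theorem~4.1 of Meyn--Tweedie~II) together with Theorem~4.6 of Meyn--Tweedie~III for the \emph{finite} decomposition into \emph{positive} Harris sets, and Lasserre's theorem plus the argument of Theorem~7.1(i) of Meyn--Tweedie~I for mean ergodicity. Your direct Dynkin/optional-stopping proof of $\mathbb{P}_x\{\Phi\to\infty\}=0$ and your Foster-type positivity argument are sound, modulo standard care: the local martingale must be upgraded to a supermartingale via nonnegativity and Fatou, and the bound $c\,\pi_i(f)\le d\,\pi_i(C)$ should be derived at the resolvent level (e.g.\ $\pi_i(Rf)\le (d/c)\,\pi_i(C)$ by the comparison theorem), since integrating $\EL V$ directly against $\pi_i$ is not justified when $\pi_i(V)$ may be infinite.

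The genuine gap is in your finiteness-of-$N$ step. Lemma~\ref{t:dec} is a statement about an \emph{abstract} $T$-process satisfying (V): it is an ingredient of the general Theorem~\ref{t:mean}, which is only afterwards applied to the savanna models through Theorem~\ref{th:mainVT}. You may therefore not invoke the explicit continuous component $c_{x_k}\mathbf{1}_{U_{x_k}}(x)\,m(\cdot\cap V_{x_k})$ constructed in Theorem~\ref{th:mainVT}; in the setting of the lemma all that is available is \emph{some} non-trivial lower semicontinuous component $T$ of $K_a$, whose minorizing measure $T(x,\cdot)$ varies with $x$, so the argument ``two disjoint Harris sets cannot both absorb the same Lebesgue-absolutely-continuous piece'' has nothing to latch onto. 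The step can be repaired abstractly, and your instinct that (V) enters through $\pi_i(C)\ge c/d$ is exactly right: that bound shows every $H_i$ meets the compact set $C$; if infinitely many did, pick $y_k\in H_{i_k}\cap C$ with $y_k\to y^*\in C$, note that $T(y_k,\cdot)$ is concentrated on the absorbing set $H_{i_k}$ with total mass at least $\beta:=\inf_{y\in C}T(y,\SP)>0$ (a positive lower semicontinuous function attains its infimum on a compact set), deduce from lower semicontinuity that $T(y^*,H_{i_j})\le\liminf_k T(y_k,H_{i_j})=0$ for every $j$, hence $T(y^*,W)\ge\beta$ for $W=\SP\setminus\bigcup_j H_{i_j}$, and then lower semicontinuity of $T(\cdot,W)$ forces $T(y_k,W)>0$ for large $k$, contradicting $T(y_k,W)\le K_a(y_k,W)=0$. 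Alternatively, do what the paper does and quote the Doeblin decomposition for $T$-processes together with Theorem~4.6 of Meyn--Tweedie~III, which packages precisely this finiteness and positivity.
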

\begin{proof}
The function $V$ in condition (V) is norm-like and satisfies $\mathcal{L}V(x)\le d\mathbf{1}_{C}(x)$ for all $x\in \SP$. Thus condition (CD1) of \cite{meyn_tweedieII} holds and \cite[Theorem 3.1]{meyn_tweedieII} implies that $\mathbb{P}_x\{\Phi\to\infty\}=0$ for all $x\in \SP$.  The Doeblin decomposition  \cite[Theorem 4.1]{meyn_tweedieII} and  \cite[Theorem 4.6]{meyn_tweedieIII} show that the space $\SP$ has the required decomposition. It follows from \cite[Theorem 2.1]{meyn_tweedieII} that
\[
\mathbb{P}_x\{\check{\tau}_H<\infty\}=\mathbb{E}_x\big(1-\exp(-\eta_H)\big),\quad x\in \SP,
\]
where $\check{\tau}_H=\inf\{k\ge 1: \check{\Phi}_k\in H\}$ is
the first hitting time of $H$ by the $R$-chain. Consequently, $\mathbb{P}_x\{\check{\tau}_H<\infty\}=1$ for all $x\in \SP$.

 From \cite[Theorem 2.1]{tweedie79} extended in \cite{costa2006} to the case of Borel right process it follows that a set is a maximal Harris set for the process $\Phi$ if and only if its is a maximal Harris set for the $R$-chain.
Hence, the $R$-chain restricted to the set $H_i$ is a positive  Harris recurrent chain with the unique invariant probability measure $\pi_i$. By \cite[Theorem 1.2]{lasserre01} for each $x\in H_i$ we have
\[
\lim_{n\to \infty}\frac{1}{n}\sum_{k=0}^{n-1} R^k(x,\cdot)=\pi_i,
\]
where the convergence is in the total variation norm on  $\mathcal{M}(H_i)$. Thus the $R$-chain is mean ergodic on each set $H_i$. The rest of the proof is similar to the proof of part (i) of \cite[Theorem 7.1]{meyn_tweedieI}.
\end{proof}

\begin{remark}\label{r:pi}
It should be noted that the limiting measure $\mu\Pi$ in \eqref{d:meg} is of the form
\[
\mu\Pi(B)=\int_{\SP}\Pi(x,B)\mu(dx),
\]
where the kernel $\Pi$ is given by  \cite[Theorem 7.1]{meyn_tweedieI}
\begin{equation}\label{d:Pi}
\Pi(x,B)=\sum_{i=1}^N \pi_i(B\cap H_i)\mathbb{P}_x\{\check{\tau}_{H_i}<\infty\}, \quad x\in \SP, B\in \mathcal{B},
\end{equation}
and $\pi_i$, $i=1,\ldots N$, are invariant probability measures.
Moreover, as in the proof of   \cite[Theorem 7.1]{meyn_tweedieI} we obtain that for any bounded Borel measurable~$f$
\[
\mathbb{P}_x\left(\lim_{n\to\infty}\frac{1}{n}\sum_{k=1}^n f(\breve{\Phi}_k)=\int fd\tilde{\Pi}\
\right)=1, \quad x\in \SP,
\]
where the random measure $\tilde{\Pi}$ is defined as
\[
\tilde{\Pi}(B) =\sum_{i=1}^N \mathbf{1}(\breve{\tau}_{H_i}<
\infty)\pi_i(B\cap H_i).
\]
\end{remark}

Theorem~\ref{t:mean} is a direct consequence of Lemmas \ref{t:meane} and \ref{t:dec} together with Remark~\ref{r:pi}.

\section{Discussion}

In the present paper we propose a novel approach to the study of seasonal dynamics. It can be applied to stochastic models in population dynamics that underlie periodic changes to its parameters. Especially we provide sufficient conditions for coexistence of competing species.
As a model we introduce two PDMPs describing behavior in each season as the system switching between them in given constant periods of time (season lengths). This may be generalized to more seasons than two. Such description needs additional time variable to keep track of the duration of stay in the present season, leading to time-homogeneous Markov processes. Therefore one cannot use the usual approach to study convergence of distributions. We explore the time averages instead and provide sufficient conditions for their convergence.

The common way to study the effects of seasonality on the dynamics of populations modeled with differential equations is to consider periodically forced  parameters \cite{cushing77,cushing80}. Such models are very difficult to treat analytically although there exist general tools for a study of non-autonomous differential equations with continuous and periodic functions of time \cite{holmes83,kloeden2011}.
A frequently used numerical approach is  bifurcation analysis, first used in this context in \cite{kuznetsov92, rinaldi93}, where for simplicity,  the forcing is
of the form \[
c(t)=c_0(1+\varepsilon \sin(2\pi t)),
\]
with $c_0$ being any model parameter and $\varepsilon$ denoting the forcing amplitude (see   \cite{taylor2013} and the references therein).

Another attempt to model seasonal effects is related to the so-called (seasonal) succession dynamics \cite{klausmeier2010} or, formally similar, behavior shift \cite{tyson2016},
in which the model equations change between seasons. A detailed analysis is possible in simple models \cite{hsuzhao2012}. By changing growth parameters in our equations \eqref{no-fire} and  \eqref{no-fire.herb}  to piecewise-constant periodic functions of time we get examples of this dynamics, with a particular behavior illustrated as in Figure~\ref{fig:herb}. This approach, in contrary to the situation in the previous paragraph, gives a discontinuous periodic forcing and can simplify the analysis.
Including seasonality might still not  support coexistence of species, as in the case of  model \eqref{no-fire},  since positive solutions of both systems converge to the same equilibrium $(1,0)$ representing woodland. Modeling fire impact on vegetation introduces stochasticity into our systems and can have a positive effect on survival of all species. Especially, adding fire alone or together with herbivores  prevents an overgrowth of trees and allows existence of mixed woodland-grassland ecosystem reflecting savanna.

In general, savanna models incorporate fire disturbances into model equations in a deterministic way \cite{staver2011tree,yatat2018tribute,hoyer-leitzel2021}. To our knowledge there exists only a discrete-time matrix model \cite{accatino2013humid} that contains both, seasonality and fire-vegetation feedback, but it does not provide any analytical insight focusing mainly on simulations. We propose the analytically tractable continuous-time models, although they are less convenient to simulate and limited to discrete losses of the biomass while it would be more realistic to model impact of fire in a spatially explicit way.



%
We were not studying sufficient conditions for the  uniqueness of invariant distributions in our models and leave it to a future work. Once uniqueness is obtained then the law of large numbers from Theorem \ref{th:main} implies automatically  stochastic persistence \cite{benaim2018, benaim2016, nguyen2018,  guillin2019arxiv} of considered  populations. It would be also interesting to study extinction \cite{benaim2016,nguyen2018, nguyen2020}.
Our approach can be used to extend other stochastic models like \cite{nguyen2018} by adding seasonal effects.

\bibliographystyle{siamplain}

\bibliography{references_siam}

\end{document}